\newtheorem{lemma}{Lemma}[section]
\newtheorem{Defi}{Definition}[section]
\newtheorem{them}[lemma]{Theorem}
\newtheorem{rem}{Remark}
\newtheorem{pro}{Problem}
\DeclareMathOperator{\rank}{rank}
\DeclareMathOperator{\res}{Res}
\begin{document}

\title{\textbf{
Square-free Discriminants of Matrices
and the Generalized Spectral Characterizations of Graphs}\footnote{This work is supported by National Natural Science Foundation of China (No. 11471005)}}
\author{
\small Wei Wang \footnote{ E-mail
address: wang$\_$weiw@163.com. }
 \qquad Tao Yu
\\  \small School of Mathematics and Statistics,
 Xi'an Jiaotong University,\\
\small  No.28 Xianning West Road, Xi'an, 710049, P.R. China}
\date{}
\maketitle

%%%%%%%%%%%%%%%%%%%%%%%%%%%%%%%%%%%%%%%%%%%%%%%%%%%%%%%%%%%%%%%%%%%%%%5

\abstract \vspace{0.5cm} Let $S_n(\mathbb{Z})$ and $O_n(\mathbb{Q})$ denote the set of all $n\times n$ symmetric matrices over the ring of integers $\mathbb{Z}$ and the set of all $n\times n$ orthogonal matrices over the field of rational numbers $\mathbb{Q}$, respectively. The paper is mainly concerned with the following problem: Given a matrix $A\in {S_n(\mathbb{Z})}$. How can one find all rational orthogonal matrices $Q\in{O_n(\mathbb{Q})}$ such that $Q^TAQ\in {S_n(\mathbb{Z})}$, and in particular, when does $Q^TAQ\in {S_n(\mathbb{Z})}$ with $Q\in{O_n(\mathbb{Q})}$ imply that $Q$ is \emph{a signed permutation matrix} (i.e., the matrix obtained from a permutation matrix $P$ by replacing each 1 in $P$ with 1 or $-1$)?

A surprisingly simple answer was given in terms of whether the discriminant of the characteristic polynomial of $A$ is odd and square-free, which partially answers the above questions. More precisely, let $\Delta_A=\pm \res(\phi,\phi')$ be \emph{the discriminant of matrix $A$}, where $\res(\phi,\phi')$ is \emph{the resultant} of the characteristic polynomial $\phi$ of $A$ and its derivative $\phi'$. We show that if $\Delta_A$ is odd and square-free, then $Q^TAQ\in {S_n(\mathbb{Z})}$ with $Q\in{O_n(\mathbb{Q})}$ implies that $Q$ is a signed permutation matrix. As an application, we present a simple and efficient method for testing whether a graph is determined by the generalized spectrum, which significantly extends our previous work.\\

\noindent
Key Words: Discriminant; Square-free; Resultant; Rational orthogonal matrix; Spectra of graphs; Cospectral graphs; Determined by spectrum\\
\noindent
AMS classification: 05C50

\newpage

\section{Introduction}
Throughout the paper, let $\mathbb{Z}$, $\mathbb{Q}$ and $\mathbb{F}_p$ denote the ring of integers, the field of rational numbers and the finite field with $p$ (a prime number) elements, respectively. Let $S_n(\mathbb{Z})$ and $O_n(\mathbb{Q})$ denote the set of all symmetric matrices of order $n$ over $\mathbb{Z}$ and the set of all orthogonal matrices of order $n$ over $\mathbb{Q}$, respectively.

A square matrix $P$  is called \emph{a singed permutation matrix} if each of its row  and column has exactly one non-zero entry $\pm 1$. In this paper, we are mainly concerned with the following:
\begin{pro} Given a matrix $A\in {S_n(\mathbb{Z})}$, how can one find all rational orthogonal matrices $Q\in{O_n(\mathbb{Q})}$ such that $Q^TAQ\in {S_n(\mathbb{Z})}$?
\end{pro}
Clearly, if $Q$ is a signed permutation matrix, $Q^TAQ$ is an integral matrix for any $A\in {S_n(\mathbb{Z})}$. So in particular, we are more interested in
\begin{pro} When does $Q^TAQ\in {S_n(\mathbb{Z})}$ with $Q\in{O_n(\mathbb{Q})}$ imply that $Q$ is a signed permutation matrix?
\end{pro}

The motivation of our study comes from the recent work of Wang~\cite{W3,W4}, Wang and Xu~\cite{W1,W2}, in an attempt to characterize graphs by their generalized spectra, which will be briefly described below.

Let $G$ be a simple graph with adjacency matrix $A$. \emph{The spectrum} of $G$, denoted by $\sigma{(G)}$, consists of the eigenvalues (including multiplicities) of the matrix $A$. The spectrum of $G$ together with that of its complement, denoted by $\sigma(G)\& \sigma(\bar{G})$, is referred as \emph{the generalized spectrum} of $G$ in the paper. Two graphs $G$ and $H$ are \emph{cospectral} if they share the same spectrum (i.e., $\sigma(G)=\sigma(H)$). In particular, $G$ and $H$ are \emph{cospectral w.r.t. the generalized spectrum} if $\sigma(G)=\sigma(H)$ and $\sigma(\bar{G})=\sigma(\bar{H})$. A graph $G$ is said to be \emph{determined by the spectrum}, DS for short (resp. \emph{determined by the generalized spectrum}, DGS for short), if for any graph $H$, $\sigma(G)=\sigma(H)$ (resp. $\sigma(G)=\sigma(H)$ and $\sigma(\bar{G})=\sigma(\bar{H})$) implies that $H$ is isomorphic $G$.

 A fundamental question in the theory of graph spectra is: ``What kinds of
graphs are determined by the spectrum?" The problem dates back
to more than 50 years ago and originates from chemistry, which is also closely related to a famous problem of Kac~\cite{Kac}:  ``Can one hear the shape
of a drum?" For the background and some known results about this problem, we refer the reader to \cite{DH,DH1} and the references therein.

 Whereas it is comparatively easy to construct pairs of cospectral but non-isomorphic graphs, it is more challenging to show a given graph or a family of graphs are DS. Recently, Wang and Xu~\cite{W1,W2}, Wang~\cite{W3,W4} developed a powerful method to test whether a controllable graph is DGS. Here a graph is \emph{controllable} means its walk-matrix $W=[e, Ae,\cdots, A^{n-1}e]$ ($e$ is all-ones vector; $n$ is the order of the graph) is non-singular. Denote by ${\cal{G}}_n$ the set of all controllable graphs. It was conjectured by Godsil \cite{G} that almost every graph is controllable. A proof of this conjecture has been recently announced in \cite{OT}.

 The key observation of the method of Wang and Xu is the following

\begin{them}[Wang and Xu~\cite{W1}] Let $G \in {\cal G}_n$. Then there exists a graph $H$ that is cospectral with $G$ w.r.t. the generalized spectrum if and only if there exists a rational
orthogonal matrix $Q$ such that $Q^{\mathrm{T}}A(G)Q = A(H)$ and $Qe = e$.
\end{them}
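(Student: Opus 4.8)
The plan is to prove both implications around the walk matrix $W(G)=[e,Ae,\dots,A^{n-1}e]$, writing $A=A(G)$, $B=A(H)$, $J=ee^{T}$, and $\bar A=J-I-A$, $\bar B=J-I-B$ for the complements. I would first treat the implication that the existence of $Q$ forces cospectrality. Suppose $Q$ is rational orthogonal with $Q^{T}AQ=B$ and $Qe=e$. Since $Q^{T}=Q^{-1}$, the relation $Qe=e$ also yields $Q^{T}e=e$, so $Q^{T}JQ=(Q^{T}e)(Q^{T}e)^{T}=ee^{T}=J$. Hence $\bar B=Q^{T}JQ-Q^{T}Q-Q^{T}AQ=Q^{T}\bar AQ$, which together with $B=Q^{T}AQ$ shows that $A\sim B$ and $\bar A\sim\bar B$ are orthogonally similar; therefore $\sigma(G)=\sigma(H)$ and $\sigma(\bar G)=\sigma(\bar H)$, i.e.\ $G$ and $H$ are cospectral w.r.t.\ the generalized spectrum.

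For the converse, that cospectrality produces $Q$, assume $\phi_A=\phi_B$ and $\phi_{\bar A}=\phi_{\bar B}$. My candidate is $Q=W(G)\,W(H)^{-1}$, which automatically satisfies $Qe=e$ because the first column of every walk matrix is $e$. The crux, and the step I expect to be the main obstacle, is to show that the generalized spectrum forces the Gram identity $W(G)^{T}W(G)=W(H)^{T}W(H)$, equivalently $e^{T}A^{k}e=e^{T}B^{k}e$ for all $k\ge 0$. I would extract this from the complement by a rank-one determinant identity: applying the matrix determinant lemma to $xI-\bar A=\bigl((x+1)I+A\bigr)-ee^{T}$ gives
$$\phi_{\bar A}(x)=(-1)^{n}\phi_A(-1-x)\bigl(1+W_A(-1-x)\bigr),\qquad W_A(t):=e^{T}(tI-A)^{-1}e=\sum_{k\ge 0}\frac{e^{T}A^{k}e}{t^{k+1}}.$$
Since $\phi_A=\phi_B$ and $\phi_{\bar A}=\phi_{\bar B}$, this identity determines the walk generating function uniquely, so $W_A=W_B$ as rational functions and the Gram identity follows.

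With the Gram identity the remaining verifications are routine. As $G$ is controllable, $W(G)$ is nonsingular, and $\bigl(\det W(H)\bigr)^{2}=\det\!\bigl(W(H)^{T}W(H)\bigr)=\det\!\bigl(W(G)^{T}W(G)\bigr)=\bigl(\det W(G)\bigr)^{2}\ne 0$, so $W(H)$ is invertible (and $H$ is controllable) and $Q$ is a well-defined rational matrix. Orthogonality is immediate from $Q^{T}Q=(W(H)^{T})^{-1}\bigl(W(G)^{T}W(G)\bigr)W(H)^{-1}=(W(H)^{T})^{-1}\bigl(W(H)^{T}W(H)\bigr)W(H)^{-1}=I$. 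Finally, Cayley--Hamilton applied to $e$ gives $A\,W(G)=W(G)\,C$ with $C$ the companion matrix of $\phi_A$, so $W(G)^{-1}AW(G)=C$, and likewise $W(H)^{-1}BW(H)$ is the companion matrix of $\phi_B$; since $\phi_A=\phi_B$ these coincide, whence $Q^{-1}AQ=W(H)\,C\,W(H)^{-1}=B$. As $Q^{-1}=Q^{T}$, this is $Q^{T}AQ=B$, completing the construction.
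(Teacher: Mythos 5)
The paper states this result only as a quotation from Wang and Xu~\cite{W1} and supplies no proof of its own, so there is no internal argument to compare against; your proof is correct and follows essentially the standard route of that reference --- the easy direction via $Q^{T}JQ=J$, and the converse via the Johnson--Newman-type determinant identity showing that generalized cospectrality forces $e^{T}A^{k}e=e^{T}B^{k}e$ for all $k$, followed by the choice $Q=W(G)W(H)^{-1}$. All the supporting verifications (invertibility of $W(H)$ from the Gram identity and controllability of $G$, orthogonality, $Qe=e$, and the companion-matrix computation giving $Q^{T}AQ=B$) are sound.
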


Let $\mathcal{Q}_G=\{Q\in{O_n(\mathbb{Q})}|Q^TAQ ~{\rm is~a~}(0,1)-{\rm matrix ~and}~ Qe=e\}$. We have the following
\begin{them}[Wang and Xu~\cite{W1}] Let $G \in {\cal G}_n$. Then $G$ is DGS
if and only if the set ${\cal Q}_G$ contains only permutation matrices.
\end{them}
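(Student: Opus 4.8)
The plan is to derive this characterization directly from the preceding theorem, which already translates ``existence of a generalized-cospectral mate $H$'' into ``existence of a rational orthogonal $Q$ with $Q^{\mathrm T}A(G)Q=A(H)$ and $Qe=e$.'' Both directions are then largely bookkeeping, with a single genuine step at the end where controllability of $G$ is used.

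For the easy direction, I would suppose ${\cal Q}_G$ contains only permutation matrices and let $H$ be any graph generalized-cospectral with $G$. The preceding theorem furnishes $Q\in O_n(\mathbb Q)$ with $Q^{\mathrm T}A(G)Q=A(H)$ and $Qe=e$; since $A(H)$ is a $(0,1)$-matrix, $Q$ lies in ${\cal Q}_G$ and is therefore a permutation matrix, whence $H\cong G$. Thus $G$ is DGS.

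For the converse, suppose $G$ is DGS and take any $Q\in{\cal Q}_G$. Put $B=Q^{\mathrm T}A(G)Q$. I would first check that $B$ is genuinely an adjacency matrix: it is symmetric and a $(0,1)$-matrix by hypothesis, and its diagonal entries vanish because $\operatorname{tr}B=\operatorname{tr}A(G)=0$ forces every diagonal $(0,1)$-entry to be $0$. Hence $B=A(H)$ for a graph $H$, and orthogonal similarity gives $\sigma(H)=\sigma(G)$. To obtain equality of the complement spectra, I would use $Qe=e$ together with orthogonality, which yields $Q^{\mathrm T}e=e$ and hence $Q^{\mathrm T}JQ=Q^{\mathrm T}ee^{\mathrm T}Q=ee^{\mathrm T}=J$. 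Since $A(\bar G)=J-I-A(G)$, conjugating by $Q$ gives $A(\bar H)=J-I-B=Q^{\mathrm T}A(\bar G)Q$, so $\sigma(\bar H)=\sigma(\bar G)$. Thus $H$ is generalized-cospectral with $G$, and being DGS forces $H\cong G$, i.e.\ $P^{\mathrm T}A(G)P=B=Q^{\mathrm T}A(G)Q$ for some permutation matrix $P$.

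It remains to upgrade this to $Q=P$, and this is the step where I expect the real content to sit. Setting $M=QP^{\mathrm T}$, the identity above rearranges to $M^{\mathrm T}A(G)M=A(G)$, so $M$ is orthogonal and commutes with $A(G)$; moreover $Me=QP^{\mathrm T}e=Qe=e$, using $P^{\mathrm T}e=e$. Then $MA(G)^ke=A(G)^kMe=A(G)^ke$ for every $k$, so $M$ fixes each column of the walk matrix, i.e.\ $MW=W$ with $W=[e,A(G)e,\dots,A(G)^{n-1}e]$. Here controllability enters decisively: $W$ is nonsingular, so $M=I$ and $Q=P$. The main obstacle is exactly this final deduction — everything before it is formal, but concluding $Q=P$ (rather than merely $H\cong G$) genuinely requires the nonsingularity of the walk matrix, that is, the hypothesis $G\in{\cal G}_n$.
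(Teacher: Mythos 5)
Your proof is correct and follows the standard argument (the paper itself only cites this result from Wang and Xu~\cite{W1} without reproducing a proof): the forward direction is immediate from the preceding theorem, and the converse reduces, via the DGS hypothesis, to showing that the orthogonal matrix $M=QP^{\mathrm T}$ commuting with $A(G)$ and fixing $e$ must fix the walk matrix, so that nonsingularity of $W$ forces $M=I$. This is exactly where controllability is used in the original argument as well, so no further comment is needed.
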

By the theorem above, for a controllable graph $G$ with adjacency matrix $A$, in order to tell whether $G$ is DGS or not, we have to deal with the following
\begin{pro} How can one find all $Q\in{O_n(\mathbb{Q})}$ with $Qe=e$ such that if $Q^TAQ$ is a $(0,1)$-matrix? and in particular, when does the fact that $Q\in{O_n(\mathbb{Q})}$ with $Qe=e$ such that $Q^TAQ$ is a $(0,1)$-matrix imply that $Q$ is a permutation matrix?
\end{pro}

Form the above discussions, we can be see that Problems 1 and 2 are natural extensions of Problem 3, and moreover, the resolution of Problems 1 and 2 would help
us in solving Problem 3, as we shall see later. We would like to mention that, to the best of our knowledge, we know no article in the literatures dealing with Problems 1 and 2, and only a slightly related problem was considered by Friedland~\cite{Fr}: Given two rational symmetric matrices with the same characteristic polynomial, when does there exist a $Q\in{O_n(\mathbb{Q})}$ such that $B=Q^TAQ$?

In Wang~\cite{W3,W4}, the author gave a simple arithmetic criterion to solve Problem 3, in terms of the pattern of the prime factorization of $\det(W)$. More precisely, we have
\begin{them}[Wang~\cite{W3,W4}] \label{Main}If $\frac{\det(W)}{2^{\lfloor\frac{n}{2}\rfloor}}$ (which is always an integer) is odd and square-free, then ${\cal Q}_G$ contains only permutation matrices, and hence $G$ is DGS.
\end{them}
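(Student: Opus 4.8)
The plan is to reduce the whole statement to the arithmetic of a single positive integer attached to each $Q\in\mathcal{Q}_G$, its \emph{level}. For $Q\in O_n(\mathbb{Q})$ let $\ell=\ell(Q)$ be the least positive integer such that $\ell Q$ is an integer matrix, and write $M=\ell Q$. I claim it suffices to prove $\ell(Q)=1$ for every $Q\in\mathcal{Q}_G$. Indeed, an integral orthogonal matrix has rows that are integer vectors of Euclidean norm $1$, hence is a signed permutation matrix, and the constraint $Qe=e$ forces every row sum to equal $1$, which excludes the entry $-1$; so $\ell(Q)=1$ together with $Q\in\mathcal{Q}_G$ already forces $Q$ to be a permutation matrix. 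By the second theorem of Wang and Xu quoted above, $\mathcal{Q}_G$ containing only permutation matrices is exactly the condition for $G$ to be DGS, so the theorem will follow.

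Next I would set up the walk-matrix bookkeeping. Given $Q\in\mathcal{Q}_G$, the matrix $B:=Q^{\mathrm T}AQ$ is a symmetric $(0,1)$-matrix, i.e.\ $B=A(H)$ for a graph $H$ that, by the first theorem of Wang and Xu and the hypothesis $Qe=e$, is cospectral with $G$ with respect to the generalized spectrum; in particular $A$ and $B$ share the characteristic polynomial $\phi$. Since $Qe=e$ gives $Q^{\mathrm T}e=Q^{\mathrm T}(Qe)=e$, one gets $B^{k}e=Q^{\mathrm T}A^{k}e$, whence, writing $W_B=[e,Be,\ldots,B^{n-1}e]$,
\[
W_B=Q^{\mathrm T}W,\qquad\text{so that}\qquad W^{\mathrm T}W=W_B^{\mathrm T}W_B .
\]
From $Q=WW_B^{-1}$ and $|\det W_B|=|\det W|$ one sees, on multiplying by the adjugate of $W_B$, that $(\det W)\,Q$ is integral, hence $\ell\mid\det W$. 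Thus the entire problem is to control which primes can divide $\ell$.

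The heart of the argument would be two arithmetic lemmas: (L1) the level $\ell(Q)$ is odd; and (L2) if $p$ is an odd prime with $p\mid\ell(Q)$, then $p^{2}\mid\det W$. Granting these, the theorem is immediate: the hypothesis means $\det W=2^{\lfloor n/2\rfloor}d$ with $d$ odd and square-free, so the $2$-adic valuation of $\det W$ is exactly $\lfloor n/2\rfloor$ and no odd prime square divides $\det W$; by (L2) no odd prime divides $\ell$, and by (L1) neither does $2$, forcing $\ell=1$. To prove (L2) I would argue by a rank drop modulo $p$. Reducing $M=\ell Q$ mod $p$, minimality of $\ell$ forces $\bar M\neq 0$ over $\mathbb{F}_p$; orthogonality gives $M^{\mathrm T}M=\ell^{2}I\equiv 0$, so $\bar M^{\mathrm T}\bar M=0$ (the column space of $\bar M$ is totally isotropic); and $M^{\mathrm T}W=\ell W_B\equiv 0$ gives $\bar M^{\mathrm T}\bar W=0$, so the column space of $\bar W$ lies in $\ker\bar M^{\mathrm T}$ and therefore
\[
\rank_{\mathbb{F}_p}W\le n-\rank_{\mathbb{F}_p}\bar M .
\]
Since $\rank_{\mathbb{F}_p}W\le n-2$ implies $p^{2}\mid\det W$ (at least two invariant factors of $W$ are divisible by $p$), it remains only to prove $\rank_{\mathbb{F}_p}\bar M\ge 2$.

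This last inequality is the main obstacle, and it is where I expect the real work to lie. The natural extra input is the intertwining relation $AM=MB$ coming from $AQ=QB$, which reduces to $\bar A\bar M=\bar M\bar B$ over $\mathbb{F}_p$, together with $\bar M e=\overline{\ell e}=0$. If $\rank\bar M=1$, say $\bar M=uv^{\mathrm T}$, then isotropy forces $u^{\mathrm T}u=0$, the relation $\bar M e=0$ forces $v^{\mathrm T}e=0$, and $\bar A\bar M=\bar M\bar B$ forces $u$ to be an eigenvector of $\bar A$; the hard part will be to derive a contradiction from the existence of such an \emph{isotropic} eigenvector lying in the cyclic space $\mathbb{F}_p[\bar A]e$, using the symmetry of $A$, the controllability of $G$, and the invariant $W^{\mathrm T}W=W_B^{\mathrm T}W_B$. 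Ruling out this rank-one configuration precisely when $\rank_{\mathbb{F}_p}W=n-1$ is the technically delicate core of the proof, and it is exactly here that the square-free hypothesis bites. Finally, (L1) I would treat by the same reduction specialized to $p=2$: the hypothesis pins the $2$-adic valuation of $\det W$ at its minimal value $\lfloor n/2\rfloor$, and a refinement modulo $4$ of the isotropy computation over $\mathbb{F}_2$ then excludes $2\mid\ell$; I expect this parity analysis to be comparatively routine once the odd-prime case is settled.
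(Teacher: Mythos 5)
Your reduction is exactly the paper's: show $\ell(Q)=1$ for every $Q\in\mathcal{Q}_G$, note that an integral orthogonal matrix fixing $e$ is a permutation matrix, observe $W_B=Q^{\mathrm T}W$ so that $\ell\mid\det W$, and then combine two arithmetic facts --- your (L2) is precisely Theorem~\ref{LL} (together with Lemma~\ref{L}), and your (L1), under the hypothesis that the $2$-adic valuation of $\det W$ equals $\lfloor n/2\rfloor$, is precisely Theorem~\ref{LL1}. The paper's proof of the present statement consists of nothing more than this combination, because it imports Theorems~\ref{LL} and~\ref{LL1} as known results from~\cite{W3} and~\cite{W4}. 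So the skeleton of your argument is right and matches the source.

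The gap is that you do not prove (L1) or (L2); you only describe where the work would go, and in both cases the deferred part is the entire content of the cited theorems. For (L2), everything up to the inequality $\rank_{\mathbb{F}_p}W\le n-\rank_{\mathbb{F}_p}\bar M$ is routine, but the exclusion of the rank-one isotropic configuration (an isotropic eigenvector of $\bar A$ inside the cyclic space $\mathbb{F}_p[\bar A]e$, when $\rank_{\mathbb{F}_p}W=n-1$) is the whole theorem of~\cite{W3}; you name the ingredients but derive no contradiction, so (L2) is unestablished. For (L1) the situation is worse: you call the parity analysis ``comparatively routine,'' but it is not --- $\mathbb{F}_2$ is degenerate for this problem (every column of $\bar M$ is automatically isotropic since $x^{\mathrm T}x\equiv e^{\mathrm T}x \pmod 2$, and $\bar A$ always has repeated invariant factors mod $2$, cf.\ Lemma~\ref{Even}), which is why $\rank_2(W)$ is always at most $\lceil n/2\rceil$ and why the exponent $\lfloor n/2\rfloor$ appears in the hypothesis at all. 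Handling $p=2$ required the separate paper~\cite{W4} and a genuinely different mod-$4$ argument; it does not follow from a ``refinement'' of the odd-prime computation. As written, the proposal is a correct derivation of the theorem from two unproved lemmas, which is acceptable only if you are permitted to cite~\cite{W3,W4} for them --- in which case you should say so explicitly rather than sketch incomplete proofs.
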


We found that, somewhat surprisingly, there exists an analogous result for solving Problem 2. Denote by $\Delta_A=\pm \res(\phi,\phi')$ \emph{the discriminant of matrix $A$}, where $ \res(\phi,\phi')$ is the resultant of polynomial $\phi$ and its derivative $\phi'$ with $\phi$ being the characteristic polynomial of matrix $A$ (see Section 2 for details). The main result of the paper is the following
\begin{them}\label{Main1}
Give a matrix $A\in{S_n(\mathbb{Z})}$. If $\Delta_A$ is odd and square-free, then $Q^TAQ\in S_n(\mathbb{Z})$ with $Q\in{O_n(\mathbb{Q})}$ implies that $Q$ is a signed permutation matrix.
\end{them}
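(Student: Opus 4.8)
The plan is to show that $Q$ must be \emph{integral}; since a rational orthogonal matrix with integer entries has orthonormal integer rows, each row is $\pm$ a standard basis vector, so $Q\in GL_n(\mathbb{Z})\cap O_n(\mathbb{Q})$ is precisely a signed permutation matrix. Thus everything reduces to proving $Q(\mathbb{Z}^n)=\mathbb{Z}^n$. Writing $L=\mathbb{Z}^n$ and $L_2=Q(\mathbb{Z}^n)$, two facts are immediate: since $Q$ is an isometry of the standard form $\langle x,y\rangle=x^{\mathrm{T}}y$, the lattice $L_2$ is again unimodular, i.e. $L_2=L_2^{\#}$ for the associated duality; and since $B=Q^{\mathrm{T}}AQ$ is integral, the relation $Q^{-1}AQ(\mathbb{Z}^n)\subseteq\mathbb{Z}^n$ gives $AL_2\subseteq L_2$, so $L_2$ is $A$-invariant. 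Hence it suffices to prove that there is only one $A$-invariant unimodular lattice, which forces $L_2=L$.

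First I would exploit that $\Delta_A\neq 0$ (a square-free integer is nonzero) means $\phi$ is separable, so $A$ has $n$ distinct eigenvalues and is nonderogatory. Consequently $V=\mathbb{Q}^n$ is a free module of rank one over the \'etale $\mathbb{Q}$-algebra $E=\mathbb{Q}[x]/(\phi)$, with $x$ acting as $A$; fixing a cyclic vector identifies $V$ with $E$. Because $A$ is self-adjoint, the standard form is $E$-balanced, and a standard computation identifies it with a trace form $\langle x,y\rangle=\mathrm{Tr}_{E/\mathbb{Q}}(\delta xy)$ for a unique $\delta\in E^{\times}$. In this language the $A$-invariant lattices are exactly the fractional ideals of orders containing $R:=\mathbb{Z}[A]\cong\mathbb{Z}[x]/(\phi)$, and unimodularity of a lattice $\mathfrak{a}$ becomes the self-duality condition $\mathfrak{a}=\mathfrak{a}^{\#}$ for the $T_\delta$-duality.

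The heart of the argument, and the step I expect to be the main obstacle, is to convert the purely matricial hypothesis into the statement that $R$ is the maximal order $\tilde{R}$ of $E$ and that the self-dual lattice is then unique. The arithmetic input is the conductor formula $\mathrm{disc}(R)=[\tilde{R}:R]^{2}\,\mathrm{disc}(\tilde{R})$ together with $\mathrm{disc}(R)=\mathrm{disc}(\phi)=\pm\Delta_A$: since $\Delta_A$ is square-free the index $[\tilde{R}:R]$ must be $1$, so $R=\tilde{R}$ is a product of Dedekind domains. (It is worth noting that for a genuine discriminant square-freeness already forces oddness, by Stickelberger's congruence $\mathrm{disc}(\phi)\equiv 0,1\pmod 4$, so the hypothesis ``odd'' is automatic and the prime $2$ needs no separate treatment.) The second part is lattice uniqueness: for the trace form, duality sends a fractional ideal $\mathfrak{a}$ to $\delta^{-1}\mathfrak{d}^{-1}\mathfrak{a}^{-1}$, where $\mathfrak{d}$ is the different, so $\mathfrak{a}=\mathfrak{a}^{\#}$ is equivalent to $\mathfrak{a}^{2}=(\delta\mathfrak{d})^{-1}$, a fixed fractional ideal. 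In a Dedekind domain the square map is injective on the group of fractional ideals, whence any two self-dual lattices coincide; applied to $L$ and $L_2$ this yields $L_2=L$, so $Q$ preserves $\mathbb{Z}^n$ and is a signed permutation matrix.

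An alternative, more elementary route avoids number theory: let $\ell$ be the level (least common denominator) of $Q$ and set $N=\ell Q$; from $N^{\mathrm{T}}A^{k}N=\ell^{2}B^{k}$ one obtains, for every prime $p\mid\ell$, a nonzero $\bar{A}$-invariant totally isotropic subspace of $\mathbb{F}_p^{\,n}$, which already forces $p\mid\Delta_A$ (and, via oddness of $\Delta_A$, that $\ell$ is odd). The genuinely hard point on this path is to upgrade $p\mid\Delta_A$ to $p^{2}\mid\Delta_A$ so as to contradict square-freeness: the polynomial $x^{2}-x-1$, whose discriminant is $5$, reduces to $(x-3)^{2}$ with an isotropic eigenvector modulo $5$, showing that a repeated factor modulo $p$ by itself is \emph{not} enough. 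One must instead use the congruences modulo $p^{2}$, equivalently the symmetry $b_i=-b_{n+1-i}$ of the $p$-adic elementary divisors of the orthogonal matrix $Q$, and this is exactly the place where the maximal-order argument above is doing the real work.
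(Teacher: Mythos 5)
Your main argument is correct, and it is a genuinely different route from the one in the paper. The paper works prime by prime with the level $\ell$ of $Q$: it shows every prime $p\mid\ell$ divides $\Delta_A$ (via an $A$-invariant isotropic subspace of $\mathbb{F}_p^n$), then, assuming $p$ odd with $p^2\nmid\Delta_A$, extracts the factorization $\phi=(x-\lambda_0)^2\varphi$ over $\mathbb{F}_p$, proves the congruence $q^T(Aq-\lambda_0q)\equiv 0\pmod{p^2}$ for the associated eigenvector, converts this into a nontrivial solution of $u\phi\equiv v\phi'\pmod{p^2}$, and concludes $p^2\mid\det(M)=\pm\Delta_A$ via the Smith normal form of the Sylvester matrix --- a contradiction. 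Your lattice-theoretic proof instead observes that $L_2=Q(\mathbb{Z}^n)$ is an $A$-invariant unimodular lattice, identifies $\mathbb{Q}^n$ with the \'etale algebra $E=\mathbb{Q}[x]/(\phi)$ and the form with a twisted trace form, uses the conductor--discriminant relation to deduce from square-freeness that $\mathbb{Z}[A]$ is the maximal order, and then gets uniqueness of the self-dual fractional ideal from injectivity of squaring in the ideal group; all the facts you invoke are standard and correctly applied. What your approach buys: it is conceptually cleaner, it explains \emph{why} square-freeness is the natural hypothesis (maximality of $\mathbb{Z}[A]$), it treats $p=2$ uniformly, and your Stickelberger observation ($\mathrm{disc}(\phi)\equiv 0,1\pmod 4$, so square-free already forces odd) is correct and shows the ``odd'' hypothesis is redundant in Theorem~\ref{Main1}. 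What the paper's approach buys: it is elementary and self-contained, and --- more importantly for the application --- its key lemma is \emph{localized at a single odd prime} ($p\mid\Delta_A$, $p^2\nmid\Delta_A$ implies $p\nmid\ell$), which is exactly what Theorem~\ref{Main2} needs, since for adjacency matrices $\Delta_A$ is always even and never globally square-free. Your global argument would have to be redone over $\mathbb{Z}_p$ (maximality of $\mathbb{Z}_p[A]$ at the prime $p$) to recover that per-prime statement; your closing paragraph correctly identifies the mod-$p^2$ upgrade as the crux of the elementary route but does not carry it out, so the ideal-theoretic argument is the one doing the work in your proposal.
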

\vspace{1mm}
\begin{rem} In a recent break-through work~\cite{BSW}, Bhargava et. al. proved that the density of polynomials with square-free discriminants is approximately $35.8232\%$; see Section 7 for more details. We conjecture that a similar result holds for symmetric integral matrices, which would imply that Theorem~\ref{Main1} holds for a positive fraction of matrices $A\in{S_n(\mathbb{Z})}$.
\end{rem}

As a consequence of Theorems~\ref{Main} and \ref{Main1}, we have

\begin{them}\label{Main2} Let $G$ be a graph with adjacency matrix $A$. Let $d=\gcd(\frac{\det(W)}{2^{\lfloor\frac{n}{2}\rfloor}},\Delta_A)$. If $d$ is odd and square-free, then $G$ is DGS.
\end{them}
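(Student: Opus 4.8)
The plan is to reduce, via the characterization of Wang and Xu (the DGS criterion in the excerpt), to proving that every $Q\in\mathcal Q_G$ is a permutation matrix, and then to run the local, prime-by-prime arguments behind Theorems~\ref{Main} and~\ref{Main1} \emph{simultaneously} at each prime dividing the level of $Q$. Since $d$ is square-free it is in particular nonzero, so I would work inside the class $\mathcal G_n$ of controllable graphs (where $\det W\neq0$), as the framework requires. Fix $Q\in\mathcal Q_G$; then $B:=Q^{T}AQ$ is a symmetric $(0,1)$-matrix, hence $B\in S_n(\mathbb Z)$, and $Qe=e$. Let $\ell$ be the \emph{level} of $Q$, the least positive integer with $N:=\ell Q$ integral; it suffices to show $\ell=1$, since an integral orthogonal matrix fixing $e$ must be a permutation matrix. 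I would assume $\ell>1$ for contradiction and fix a prime $p\mid\ell$.

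Next I would build the single local object that drives both halves. From $NN^{T}=\ell^{2}I$, $Ne=\ell e$ and $AN=NB$, reduction modulo $p$ yields $\bar N\bar N^{T}=0$ and $\bar A\,\bar N=\bar N\,\bar B$, so $V:=\operatorname{colspace}(\bar N)\subseteq\mathbb F_p^{\,n}$ is a nonzero, totally isotropic, $\bar A$-invariant subspace with $e\in V^{\perp}$. On the discriminant side, since $\bar A$ is symmetric it also preserves $V^{\perp}$, and its characteristic polynomial on $V$ equals that on the dual quotient $\mathbb F_p^{\,n}/V^{\perp}$; using the $\bar A$-invariant filtration $V\subseteq V^{\perp}\subseteq\mathbb F_p^{\,n}$ this gives $\bar\phi=\chi_{V}^{2}\cdot\chi_{V^{\perp}/V}$, a square times a cofactor, so $p\mid\Delta_A$. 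On the walk-matrix side, the identities $Q W_B=W_A$ and $W_B=Q^{T}W_A$ give $\bar N^{T}\bar W_A=0$, hence $\operatorname{colspace}(\bar W_A)\subseteq V^{\perp}$ and $\rank_{\mathbb F_p}W\le n-\dim V$, so $p\mid\det W$ and thus $p\mid\det W/2^{\lfloor n/2\rfloor}$. The point is that both divisibilities flow from the same $V$ and the same prime $p$.

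I would then dispose of $p=2$ cheaply: the $2$-adic parts of the proofs of Theorems~\ref{Main} and~\ref{Main1} show that $2\mid\ell$ forces both $\det W/2^{\lfloor n/2\rfloor}$ and $\Delta_A$ to be even, whence $2\mid d$, contradicting that $d$ is odd. So every $p\mid\ell$ is odd, and for such $p$ the target is the \emph{sharper} statement $p^{2}\mid\det W$ and $p^{2}\mid\Delta_A$: together these give $p^{2}\mid d$, contradicting square-freeness and forcing $\ell=1$. These two squared divisibilities are exactly the quantitative cores of Theorems~\ref{Main} and~\ref{Main1} — each theorem is proved by showing that an odd prime dividing the level must occur to at least the second power in the relevant invariant — and because both are local at the identical prime $p$ and extracted from the one subspace $V$, they can be applied to the same $p$ and combined through the gcd.

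The hard part will be the upgrade from the first to the second power. Getting $p\mid\Delta_A$ and $p\mid\det W$ from $V$ is routine, but promoting each to $p^{2}$ is delicate, and this is precisely where the individual square-freeness hypotheses of Theorems~\ref{Main} and~\ref{Main1} are not at our disposal, only square-freeness of their gcd. The difficulty concentrates in the degenerate case $\dim V=1$ (equivalently $\rank_{\mathbb F_p}W=n-1$, or a single tamely split double root of $\bar\phi$), where the naive estimates give only the first power. The way out is to extract more from $V$: the isotropic eigenvector spanning it forces the associated eigenvalue to be a root of $\bar\phi$ of multiplicity at least two and, at the same time, forces a second rank drop of $W$ modulo $p$, so that $v_p(\det W)\ge 2$ and $v_p(\Delta_A)\ge 2$ both hold genuinely. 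Establishing this doubling uniformly — that is, reproving the key lemmas of Theorems~\ref{Main} and~\ref{Main1} in the per-prime form ``$p\mid\ell\Rightarrow p^{2}\mid\det W$'' and ``$p\mid\ell\Rightarrow p^{2}\mid\Delta_A$'', and checking both for one and the same $p$ — is the heart of the matter; once it is in place, the gcd hypothesis closes the argument at once.
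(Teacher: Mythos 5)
Your proposal is correct and follows essentially the same route as the paper: fix $Q\in\mathcal Q_G$ with level $\ell$ and a prime $p\mid\ell$, kill $p=2$ using the parity of $\Delta_A$ (Lemma~\ref{Even}) together with Theorem~\ref{LL1}, and for odd $p$ observe that $p\mid\ell$ forces both $p^2\mid\det W$ and $p^2\mid\Delta_A$ (the contrapositives of Theorem~\ref{LL} and Lemma~\ref{pnmidl}, which are exactly the per-prime ``quantitative cores'' you identify), whence $p^2\mid d$, contradicting square-freeness. The ``doubling'' step you flag as the heart of the matter is precisely what those two already-established per-prime results supply, so no new work is needed there.
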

\vspace{1mm}
\begin{rem}Theorems~\ref{Main1} is the best possible in the sense that if we allow $p^2|\Delta_A$ for some odd $p$, then we cannot guarantee Theorems~\ref{Main1} to be true any more.
\end{rem}

\begin{rem}Compared with Theorem~\ref{Main}, Theorem~\ref{Main2} has more wide applicability, as we shall see in Section 5. Moreover, it saves more computational costs using Theorem~\ref{Main2} to test whether $G$ is DGS, since $\gcd(\frac{\det(W)}{2^{\lfloor\frac{n}{2}\rfloor}},\Delta_A)$ is much easier to compute than testing whether $\frac{\det(W)}{2^{\lfloor\frac{n}{2}\rfloor}}$ is square-free
or not, say, by factoring it (since there is no better way to tell a large number is square-free or not).
\end{rem}

The rest of the paper is organized as follows. In Section 2, we give some preliminary results that will be needed in the proof of Theorem~\ref{Main1}. In Section 3, we give a relation of the level of $Q$ and the discriminant of the matrix $A$. In Section 4, we present the proof
of Theorem~\ref{Main1}. In Section 5, we present the proof
of Theorem~\ref{Main2}. In Section 6, we give some examples to illustrate our method and then conduct some numerical experiments to compare Theorem~\ref{Main} and Theorem~\ref{Main2}. Conclusions are given in Section 7.

%%%%%%%%%%%%%%%%%%%%%%%%%%%%%%%%%%%%%%%%%%%%%%%%%%%%%%%%%%%%%%%
\section{Preliminaries}

For the convenience of the reader, in this section, we recall some notions and basic facts that will be needed later in the paper. Throughout, $p$ denotes a prime number and $\rank_p{(M)}$ denotes the rank of an integral $M$ over $\mathbb{F}_p$. We shall use $a\equiv b~({\rm mod}~p)$ and $a=b$ over $\mathbb{F}_p$ interchangeably.

\subsection{The level of a rational matrix}
The notion ``level" of a rational orthogonal matrix is proved to be useful in this paper.
\begin{Defi} The level of a rational orthogonal matrix $Q\in{O_n(\mathbb{Q})}$, denoted by $\ell(Q)$ or simply $\ell$, is the smallest positive integer $\ell$ such that $\ell Q $ is an integral matrix.
\end{Defi}

Clearly, if $\ell(Q)=1$, then $Q$ is a signed permutation matrix. Moreover, if $\ell(Q)=1$ and $Qe=e$, then $Q$ is a permutation matrix.

Recall that an $n$ by $n$ matrix $U$ with integer entries is called \emph{unimodular} if
det($U$) $= \pm1$. The following theorem is well
known.

\begin{them}
For every integral matrix $M$ with full rank, there exist unimodular
matrices $U$ and $V$ such that $M = USV$, where $S= diag(d_1, d_2, \cdots, d_n)$ is a diagonal matrix with $d_i$ being the $i$-th entry in the diagonal and $d_i\mid d_{i+1}$ for $i = 1, 2, \dots, n-1$.
\end{them}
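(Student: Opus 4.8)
The plan is to prove existence constructively, reducing $M$ to diagonal form by integer row and column operations, each realized as left- or right-multiplication by a unimodular matrix. The three operations I will use are: interchanging two rows (or columns), multiplying a row (or column) by $-1$, and adding an integer multiple of one row (or column) to another. Each corresponds to an elementary matrix of determinant $\pm1$, and products of such matrices are unimodular; hence any matrix obtained from $M$ by a sequence of these operations has the form $U'MV'$ with $U',V'$ unimodular, so it suffices to exhibit a sequence reaching the desired diagonal shape.

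The core is an induction on the order $n$, the inductive step driven by a descent on the minimal nonzero absolute value among the entries. First I would locate a nonzero entry of smallest absolute value and, by row and column swaps, move it to position $(1,1)$; call it $a$. For each other entry $m_{1j}$ in the first row, the division algorithm gives $m_{1j}=qa+r$ with $0\le r<|a|$, and subtracting $q$ times column $1$ from column $j$ replaces $m_{1j}$ by $r$; I do the same with the first column. If some remainder $r$ is nonzero, it is a nonzero entry of strictly smaller absolute value than $a$, and I restart the step with this smaller value in the corner. Since absolute values are positive integers, this descent terminates, leaving the first row and first column zero except for the $(1,1)$ entry, which I name $d_1$.

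Next I must secure the divisibility condition. If $d_1$ fails to divide some entry $m_{ij}$ of the remaining $(n-1)\times(n-1)$ block, I add row $i$ to row $1$ (bringing $m_{ij}$ into the first row) and rerun the clearing step; by the division algorithm this again produces a nonzero entry smaller than $d_1$, triggering a further round of descent. Once $d_1$ divides every entry of the block, the matrix is $\mathrm{diag}(d_1,M')$ with $d_1$ dividing all entries of $M'$. Applying the induction hypothesis to $M'$ yields $M'=U''S''V''$ with $S''=\mathrm{diag}(d_2,\dots,d_n)$ and $d_2\mid d_3\mid\cdots\mid d_n$; since $d_1$ divides every entry of $M'$, it divides $d_2$, completing the chain $d_1\mid d_2\mid\cdots\mid d_n$. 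The base case $n=1$ is immediate, and full rank of $M$ forces every $d_i$ to be nonzero.

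I expect the main obstacle to be the termination bookkeeping in the divisibility step: re-exposing a non-divisible entry by the row addition and re-clearing could disturb positions already cleared, so the descent must be tracked globally rather than locally. The clean resolution is to observe that each round strictly decreases the positive integer $|d_1|$, so only finitely many rounds can occur before $d_1$ divides the entire block, after which the induction proceeds without further interference.
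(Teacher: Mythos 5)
Your proof is correct: it is the classical constructive existence argument for the Smith Normal Form (elementary row/column operations realized by unimodular matrices, descent on the minimal nonzero absolute value, and the divisibility repair step whose termination you correctly justify by the strict decrease of $|d_1|$). The paper itself offers no proof of this statement --- it is quoted as a well-known theorem, with a pointer to Schrijver's book only for the efficient computation of the SNF --- so your argument supplies exactly the standard proof that the paper implicitly relies on, including the key observation that $d_1 \mid d_2$ because the entries of $S''=U''^{-1}M'V''^{-1}$ are integer combinations of the entries of $M'$.
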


For an integral matrix $M$, the above $S$ is called \emph{the Smith Normal Form} (SNF in short) of $M$, and $d_i$ is called the $i$-th \emph{elementary divisor} of the matrix $M$. It is noticed that the SNF of a matrix can be computed efficiently (see e.g. page 50 in~\cite{15}).

The following lemma plays a key role in the proof of Theorem~\ref{Main1}.

\begin{lemma}\label{Square}Let $M=Udiag(d_1,d_2,\cdots,d_n)V=USV$, where $S$ is the Smith Normal Form of
$M$, $U$ and $V$ are unimodular matrices and $d_i|d_{i+1}$ for
$i=1,2,\cdots,n-1$.  Then the system of congruence equations
$Mx\equiv 0~({\rm mod}~p^2)$ has a solution $x\not\equiv 0~({\rm mod}~p)$ if
and only if $p^2|d_n$,
\end{lemma}

\subsection{Some results on the generalized spectral characterization of graphs}

In this subsection, we review some previous results on the generalized spectral characterizations of graphs. As mentioned before, given a controllable graph $G$, the in order to tell
that $G$ is DGS or not, we have to find out all rational orthogonal matrices in ${\cal{Q}}_G$ explicitly. In particular, $G$ is DGS if and only if ${\cal{Q}}_G$ contains only permutation matrices. To achieve this goal, the basic strategy is to show that $\ell(Q)=1$ for every $Q\in{{\cal{Q}}_G}$.

Given a graph $G$, the following result shows that the prime divisor $p$ of $\ell$ cannot be arbitrary; it is always a divisor of $\det(W)$.

\begin{lemma}[Wang~\cite{W1}]\label{L} Let $G \in {\cal G}_n$. Let $Q \in {\cal Q}_G$ with level $\ell$, and $p$ be any prime. If $p|\ell$, then $p|d_n$ and hence $p|\det(W)$, where
$d_n$ is the $n$-th elementary divisor of $W$.
\end{lemma}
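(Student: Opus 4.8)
The plan is to exploit the intertwining relation between the walk matrices of $G$ and of the cospectral mate $H$ produced by $Q$. Write $B=Q^{\mathrm T}AQ$; since $Q\in\mathcal{Q}_G$, the matrix $B$ is a symmetric $(0,1)$-matrix, hence $B=A(H)$ for some graph $H$, and $Qe=e$. From $Q^{\mathrm T}AQ=B$ we get $AQ=QB$, and inductively $A^{k}Q=QB^{k}$ for all $k\ge 0$. Applying this to $e$ and using $Qe=e$ gives $A^{k}e=QB^{k}e$, so comparing the columns yields the fundamental identity $W(G)=Q\,W(H)$, where $W(H)=[e,Be,\dots,B^{n-1}e]$ is the (integral) walk matrix of $H$. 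Writing $W=W(G)$, this is the relation I would build everything on.

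Next I would bring in the level. Let $N=\ell Q$, which is an integral matrix by the definition of $\ell$. Since $Q$ is orthogonal, $Q^{-1}=Q^{\mathrm T}$, so from $W=Q\,W(H)$ we obtain $W(H)=Q^{\mathrm T}W$, and multiplying by $\ell$ gives the integral identity $\ell\,W(H)=N^{\mathrm T}W$. Now I reduce modulo $p$: since $p\mid\ell$ and $W(H)$ is integral, the left-hand side vanishes, so $N^{\mathrm T}W\equiv 0\pmod p$. The one point that needs care is that $N$ does not itself vanish mod $p$: if every entry of $N$ were divisible by $p$, then $\tfrac1p N=\tfrac{\ell}{p}\,Q$ would be integral, contradicting the minimality of $\ell$ (note that $\ell/p$ is a positive integer because $p\mid\ell$). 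Hence the reduction $\bar N$ is a nonzero matrix over $\mathbb{F}_p$.

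Finally I would extract the rank statement. The congruence $\bar N^{\mathrm T}\bar W=0$ forces every column of $\bar W$ to lie in $\ker\bar N^{\mathrm T}$; as $\bar N\neq 0$ this kernel has dimension at most $n-1$, so $\rank_p(W)\le n-1$. Since $\rank_p(W)$ counts exactly those elementary divisors coprime to $p$, at least one $d_j$ must be divisible by $p$, and the divisibility chain $d_1\mid d_2\mid\cdots\mid d_n$ then yields $p\mid d_n$; consequently $p\mid\det(W)=\pm\prod_i d_i$, as claimed. The only genuinely delicate step is the minimality argument guaranteeing $\bar N\neq 0$ — everything else is linear algebra over $\mathbb{F}_p$ once the identity $W(G)=Q\,W(H)$ is in hand.
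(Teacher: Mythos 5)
Your proof is correct. The key identity $W(G)=Q\,W(H)$, obtained from $AQ=QB$ and $Qe=e$, is exactly the intertwining relation on which the original argument in Wang--Xu (the paper only cites Lemma~\ref{L} from~\cite{W1} and does not reprove it) is based, so the substance of your argument matches the source. The one place you diverge is the endgame: the classical proof writes $Q^{\mathrm T}=W(H)W^{-1}$ and, using the Smith Normal Form $W=USV$, observes that $d_nW^{-1}=V^{-1}(d_nS^{-1})U^{-1}$ is integral, so $d_nQ^{\mathrm T}$ is integral and hence $\ell\mid d_n$ outright --- a stronger conclusion than the per-prime statement. Your route instead reduces $N^{\mathrm T}W\equiv 0\pmod p$ and argues via $\rank_p(W)\le n-1$ together with the fact that $\rank_p(W)$ counts the elementary divisors coprime to $p$; this is perfectly valid (and your minimality argument that $\bar N\neq 0$ is exactly the delicate point and is handled correctly), it just yields only what the lemma asks for rather than the sharper divisibility $\ell\mid d_n$. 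Either way the proof stands.
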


The following theorem shows that every odd prime factor of $\det(W)$ cannot be a divisor of $\ell$, whenever it is a prime divisor with multiplicity one.

\begin{them}[Wang~\cite{W3}]\label{LL} Let $G \in {\cal G}_n$. Let $Q \in {\cal Q}_G$ with level $\ell$, and $p$ be an odd
prime. If $p \mid \mathrm{det}(W)$ and $p^{2} \nmid \mathrm{det}(W)$, then $p\nmid\ell$.
\end{them}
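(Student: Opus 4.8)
The plan is to argue by contradiction: assuming $p\mid\ell$, I will construct an integer vector $x\not\equiv 0\pmod p$ with $W^{\mathrm T}x\equiv 0\pmod{p^2}$. Since the elementary divisors of $W^{\mathrm T}$ coincide with those of $W$, Lemma~\ref{Square} then forces $p^2\mid d_n$, hence $p^2\mid\det(W)$, contradicting the hypothesis. First I would record the identities attached to $Q\in{\cal Q}_G$. Writing $B=Q^{\mathrm T}AQ$ for the resulting $(0,1)$-matrix and using $Qe=e$ (whence $Q^{\mathrm T}e=e$), one gets $Q^{\mathrm T}A^{k}e=B^{k}e$, so $Q^{\mathrm T}W=W_B$, the walk matrix of $B$; consequently $\det(W_B)=\pm\det(W)$ and, from $Q^{\mathrm T}Q=I$, the Gram identity $W^{\mathrm T}W=W_B^{\mathrm T}W_B$. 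Because $p\mathrel{\|}\det(W)$ and $d_i\mid d_{i+1}$, exactly one elementary divisor is divisible by $p$, namely $d_n$, with $p\mathrel{\|}d_n$; hence $\rank_p(W)=n-1$ and the left $p$-kernel of $W$ is one-dimensional.

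Next I would pass to $\mathbb F_p$. Let $M=p^{\,v_p(\ell)}Q$ be the $p$-primitive normalization: it is $p$-integral, satisfies $M^{\mathrm T}M=p^{\,2v_p(\ell)}I$ and $M^{\mathrm T}W=p^{\,v_p(\ell)}W_B$, and by the definition of the level $\overline M\neq 0$. Reducing modulo $p$ gives $\overline M^{\mathrm T}\,\overline M=0$ and $\overline M^{\mathrm T}\,\overline W=0$; the latter places $\operatorname{col}(\overline W)$ inside $\ker(\overline M^{\mathrm T})$, so $\rank_p(\overline M)\le 1$, and therefore $\rank_p(\overline M)=1$. Writing $\overline M=\overline\gamma\,\overline\alpha^{\mathrm T}$, the relation $\overline M^{\mathrm T}\overline W=0$ shows that $\gamma$ spans the one-dimensional left $p$-kernel of $W$, while $\overline M^{\mathrm T}\overline M=0$ forces the isotropy $\gamma^{\mathrm T}\gamma\equiv 0$ and $\alpha^{\mathrm T}\alpha\equiv 0\pmod p$; the conditions $Qe=Q^{\mathrm T}e=e$ give in addition $\alpha^{\mathrm T}e\equiv\gamma^{\mathrm T}e\equiv 0\pmod p$. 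A useful auxiliary fact is that $AW=WL$ for the integer companion matrix $L$ of the characteristic polynomial of $A$, from which $\overline A\,\overline\gamma=\mu\,\overline\gamma$ for some $\mu\in\mathbb F_p$; thus $\gamma$ is an eigenvector of $\overline A$, which constrains the second-order data below.

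The heart of the argument is to upgrade this mod-$p$ picture to the second order. Since $\gamma$ lies in the left $p$-kernel, $W^{\mathrm T}\gamma=p\,c$ for an integer vector $c$, and to obtain $x=\gamma-p\delta$ I must solve $W^{\mathrm T}\delta\equiv c\pmod p$, which is possible exactly when $c\perp\ker_p(W)$, i.e. $c^{\mathrm T}z\equiv 0\pmod p$ for a generator $z$ of the right $p$-kernel. Establishing this single congruence is the main obstacle, and it is precisely where the full strength of orthogonality must enter: purely mod-$p$ information cannot suffice, consistent with the theorem failing once $p^2\mid\det(W)$. The plan is to extract $c^{\mathrm T}z\equiv0$ from the exact integral identities. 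Substituting $M=\gamma\alpha^{\mathrm T}+pK$ into $M^{\mathrm T}W=p^{\,v_p(\ell)}W_B$ yields an exact relation of the form $\alpha\,c^{\mathrm T}=W_B-K^{\mathrm T}W$, and then contracting the Gram identity $W^{\mathrm T}W=W_B^{\mathrm T}W_B$ on both sides by $z$ produces, after dividing out the forced factor of $p$ and using $\alpha^{\mathrm T}\alpha\equiv0$, a congruence of the shape $(c^{\mathrm T}z)\bigl(2\,\xi^{\mathrm T}K\alpha+t\,c^{\mathrm T}z\bigr)\equiv0\pmod p$, where $\xi=\tfrac1p Wz$ and $t=\tfrac1p\alpha^{\mathrm T}\alpha$. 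The remaining task is to exclude the second factor; here I expect to use that $p$ is odd (so $2$ is invertible), further contractions of the same identities against $\alpha$ and $\gamma$, and the eigenvector relation $\overline A\,\overline\gamma=\mu\,\overline\gamma$, forcing $c^{\mathrm T}z\equiv 0\pmod p$.

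Once $c^{\mathrm T}z\equiv 0\pmod p$ is secured, choosing $\delta$ with $W^{\mathrm T}\delta\equiv c\pmod p$ gives $x=\gamma-p\delta$ with $W^{\mathrm T}x\equiv 0\pmod{p^2}$ and $x\equiv\gamma\not\equiv 0\pmod p$. Lemma~\ref{Square} then yields $p^2\mid d_n$, contradicting $p\mathrel{\|}d_n$. Hence $p\nmid\ell$, as claimed.
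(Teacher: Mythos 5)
First, note that the paper does not prove Theorem~\ref{LL}; it is imported verbatim from Wang~\cite{W3}, so your proposal can only be measured against the known strategy there and against the analogous mod-$p^2$ arguments that this paper does carry out (Lemmas~\ref{Square}, \ref{LLX}, \ref{corank1}, \ref{pnmidl}). Your overall architecture is the right one and matches that strategy: assume $p\mid\ell$, use $\overline{M}^{\mathrm T}\overline{W}=0$ together with $\rank_p(W)=n-1$ (which follows correctly from $p\,\|\det(W)$ and $d_i\mid d_{i+1}$) to get $\overline{M}=\overline{\gamma}\,\overline{\alpha}^{\mathrm T}$ of rank one, record the first-order congruences $\gamma^{\mathrm T}W\equiv 0$, $\gamma^{\mathrm T}\gamma\equiv\alpha^{\mathrm T}\alpha\equiv 0$, $A\gamma\equiv\mu\gamma\pmod p$, and then try to lift $\gamma$ to an $x\not\equiv 0\pmod p$ with $W^{\mathrm T}x\equiv 0\pmod{p^2}$ so that Lemma~\ref{Square} forces $p^2\mid d_n\mid\det(W)$. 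All of that is sound, and your reduction of the lifting problem to the single congruence $c^{\mathrm T}z\equiv 0\pmod p$ (with $pc=W^{\mathrm T}\gamma$ and $z$ spanning $\ker_p(W)$) is the correct solvability criterion.

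The genuine gap is that this single congruence --- which is the entire content of the theorem --- is never proved. Contracting $W^{\mathrm T}W=W_B^{\mathrm T}W_B$ with $z$ does yield, as you say, $(c^{\mathrm T}z)\bigl(2\,\xi^{\mathrm T}K\alpha+t\,c^{\mathrm T}z\bigr)\equiv 0\pmod p$, but this only shows $c^{\mathrm T}z\equiv 0$ \emph{or} the second factor vanishes, and you offer no argument excluding the second alternative beyond ``I expect to use further contractions.'' This is exactly where the difficulty is concentrated (and where the hypothesis $p^2\nmid\det(W)$ and the oddness of $p$ must do real work: the statement is false for $p=2$ and false without the square-free hypothesis), so the proof cannot be regarded as complete; it is not evident that the particular contractions you name suffice to kill the term $2\,\xi^{\mathrm T}K\alpha+t\,c^{\mathrm T}z$. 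A secondary, fixable point: the identity $\alpha c^{\mathrm T}=W_B-K^{\mathrm T}W$ presupposes $v_p(\ell)=1$; for $v_p(\ell)\ge 2$ one instead gets $\alpha c^{\mathrm T}+K^{\mathrm T}W\equiv 0\pmod p$, from which $c^{\mathrm T}z\equiv 0$ follows immediately, so you should split these cases explicitly. For a model of how such a second-order step is actually closed, compare Lemma~\ref{LLX} of this paper, where the congruence $q^{\mathrm T}(Aq-\lambda_0q)\equiv 0\pmod{p^2}$ is derived by expanding $q$ in the columns $\bar q_i$ of $\ell Q$ and exploiting the exact identities $\bar q_i^{\mathrm T}\bar q_j=\ell^2\delta_{ij}$ and $\bar q_i^{\mathrm T}A\bar q_j=\ell^2 b_{ij}$, both $\equiv 0\pmod{p^2}$; an analogous expansion against the walk-matrix data, rather than the single Gram contraction by $z$, is what the cited proof in \cite{W3} uses to nail down the required mod-$p^2$ statement.
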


The following theorem gives a simple condition, under which the prime $p=2$ is not a divisor of $\ell$.
\begin{them}[Wang~\cite{W4}] \label{LL1}Let $G \in {\cal G}_n$. Let $Q \in {\cal Q}_G$ with level $\ell$. Suppose that $2^{\lfloor n/2\rfloor+1}\nmid\det(W)$. Then $2\nmid \ell$.
\end{them}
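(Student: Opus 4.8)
The plan is to argue by contraposition: assuming $2\mid\ell$, I would show that $2^{\lfloor n/2\rfloor+1}\mid\det(W)$, contradicting the hypothesis. Set $R=\ell Q$, an integral matrix with $R^{T}R=RR^{T}=\ell^{2}I$ and $\det R=\pm\ell^{n}$. Since $Qe=e$ forces $Q^{T}e=e$, the matrix $W'=Q^{T}W$ is exactly the walk matrix of $H=Q^{T}AQ$, and I record the transfer identities $R^{T}W=\ell W'$, its transpose $W^{T}R=\ell W'^{T}$, and $RW'=\ell W$, together with $|\det W'|=|\det W|$. By minimality of the level, $(\ell/2)Q$ is not integral, so $R\not\equiv 0\ ({\rm mod}\ 2)$; this will be used crucially at the end. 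By Lemma~\ref{L} the only primes dividing $\ell$ already divide $\det W$, so the situation is consistent with $2\mid\det W$.

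The baseline is the fact already recorded in Theorem~\ref{Main}, that $\det(W)/2^{\lfloor n/2\rfloor}$ is an integer; I would use it in the sharper form that at least $\lfloor n/2\rfloor$ of the elementary divisors of $W$ are even, equivalently $\rank_2 W\le\lceil n/2\rceil$. (This comes from the symmetry of $A$: over $\mathbb{F}_2$ one has $(A^{i}e)^{T}(A^{j}e)=e^{T}A^{i+j}e\equiv\operatorname{tr}(A^{i+j})$ and $\operatorname{tr}(B^{2})\equiv\operatorname{tr}(B)$ for symmetric $B$, which forces the column space of $W$ to be totally isotropic modulo $2$.) Writing $v_2(\cdot)$ for the exponent of $2$, and noting that the even elementary divisors form a suffix ending in $d_n$, it then suffices to show $4\mid d_n$: the top divisor then contributes $2$ to $v_2(\det W)$ while the remaining at least $\lfloor n/2\rfloor-1$ even divisors contribute at least $1$ each, giving $v_2(\det W)\ge\lfloor n/2\rfloor+1$.

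To obtain $4\mid d_n$ I would invoke Lemma~\ref{Square} with $p=2$, applied to $W^{T}$ (legitimate since $W$ and $W^{T}$ have the same Smith normal form, hence the same $d_n$): it suffices to produce $y\not\equiv 0\ ({\rm mod}\ 2)$ with $W^{T}y\equiv 0\ ({\rm mod}\ 4)$. The construction is $y=Rz$, where $z\neq 0$ is chosen in the kernel of $W'^{T}$ modulo $2$; such $z$ exists because $2\mid\det W=\pm\det W'$ forces $\rank_2 W'<n$. For this $z$ one has $W'^{T}z\equiv 0\ ({\rm mod}\ 2)$, and since $2\mid\ell$ the identity $W^{T}y=\ell\,W'^{T}z$ gives $W^{T}y\equiv 0\ ({\rm mod}\ 4)$ at once. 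This is the $2$-analogue of the resultant/quadratic-form mechanism used for odd primes in Theorem~\ref{LL}, the only change being that the threshold is pushed from $p^{2}\nmid\det W$ to $2^{\lfloor n/2\rfloor+1}\nmid\det W$, because modulo $2$ the isotropic column space of $W$ already accounts for $\lfloor n/2\rfloor$ factors.

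The hard part is guaranteeing that the constructed vector does not collapse, i.e. that some admissible $z$ satisfies $Rz\not\equiv 0\ ({\rm mod}\ 2)$; the argument fails only if the entire kernel of $W'^{T}$ modulo $2$ lies in $\ker_2 R$. To exclude this I would combine all congruences available modulo $2$: from $R^{T}R\equiv 0$ the space $\operatorname{col}_2 R$ is totally isotropic, while $RW'=\ell W\equiv 0$ gives $\operatorname{col}_2 W'\subseteq\ker_2 R$; if in addition $\ker_2 W'^{T}\subseteq\ker_2 R$, then $\ker_2 R$ contains $\operatorname{col}_2 W'+\ker_2 W'^{T}$, whose dimension equals $n-\dim\!\bigl(\operatorname{col}_2 W'\cap(\operatorname{col}_2 W')^{\perp}\bigr)$. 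Hence non-degeneracy of the standard form on $\operatorname{col}_2 W'$ would force $\ker_2 R=\mathbb{F}_2^{\,n}$, i.e. $R\equiv 0\ ({\rm mod}\ 2)$, contradicting the minimality of the level. The genuinely delicate case is therefore the one in which this form is degenerate on $\operatorname{col}_2 W'$; this is where I expect the main difficulty to lie, and I would resolve it by a finer $2$-adic (modulo $4$) analysis that exploits the full strength of $R^{T}R=\ell^{2}I\equiv 0\ ({\rm mod}\ 4)$ rather than merely its reduction modulo $2$.
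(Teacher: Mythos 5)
Your overall skeleton is sensible and broadly in the spirit of how this result is actually established: reduce to showing one extra factor of $2$ in $\det(W)$ beyond the guaranteed $2^{\lfloor n/2\rfloor}$, and try to force it via Lemma~\ref{Square} with $p=2$ by exhibiting $y\not\equiv 0\ ({\rm mod}\ 2)$ with $W^{T}y\equiv 0\ ({\rm mod}\ 4)$. The transfer identities $W^{T}R=\ell W'^{T}$, the counting of even elementary divisors from $\rank_2 W\le\lceil n/2\rceil$, and the observation that $y=Rz$ with $z\in\ker_2 W'^{T}$ automatically satisfies $W^{T}y\equiv 0\ ({\rm mod}\ 4)$ once $2\mid\ell$ are all correct. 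But the proof is not complete, and the gap is exactly the step you flag as ``the hard part'': producing $z\in\ker_2 W'^{T}$ with $Rz\not\equiv 0\ ({\rm mod}\ 2)$. Worse, the dichotomy you propose for closing it is vacuous. Your escape route is that if the standard bilinear form were non-degenerate on $\operatorname{col}_2 W'$, then $\operatorname{col}_2 W'+(\operatorname{col}_2 W')^{\perp}=\mathbb{F}_2^{n}$ would force $R\equiv 0\ ({\rm mod}\ 2)$. But $\operatorname{col}_2 W'$ is \emph{always totally isotropic} --- this is precisely the fact you invoke earlier to get $\lfloor n/2\rfloor$ even elementary divisors, and it applies verbatim to the walk matrix $W'$ of $H$. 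Hence $\operatorname{col}_2 W'\subseteq(\operatorname{col}_2 W')^{\perp}=\ker_2 W'^{T}$, the sum $\operatorname{col}_2 W'+\ker_2 W'^{T}$ is just $\ker_2 W'^{T}$ of dimension $n-\rank_2 W'\geq\lfloor n/2\rfloor$, and this is perfectly compatible with $\ker_2 W'^{T}\subseteq\ker_2 R$ together with $R\not\equiv 0\ ({\rm mod}\ 2)$ (since $\operatorname{col}_2 R$ is itself totally isotropic, $\dim\ker_2 R\geq\lceil n/2\rceil$). So the ``non-degenerate'' branch never occurs and no contradiction is reached; the entire burden falls on the unexecuted ``finer $2$-adic analysis.''

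For calibration: the paper does not prove this theorem at all --- it is quoted from Wang~\cite{W4}, where its proof is the main content of that entire article and requires a substantially more delicate analysis of $\ker_2(W)$ and of congruences modulo $4$ and $8$ that exploits the specific structure of walk matrices (not just $R^{T}R=\ell^{2}I$ and $RW'=\ell W$). It is also not clear that your intermediate target $4\mid d_n(W)$ is actually a consequence of $2\mid\ell$; the extra power of $2$ could a priori come instead from an additional even elementary divisor (i.e.\ $\rank_2 W\le\lceil n/2\rceil-1$), and a correct proof has to handle that alternative. As it stands, the proposal is an honest reduction of the theorem to its hardest step, not a proof.
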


Combining Theorems~\ref{LL} and \ref{LL1} together, it is easy to give a short proof of Theorem~\ref{Main}. Suppose that $\det(W)/2^{\lfloor n/2 \rfloor}$ is odd and square-free. Let $Q \in {\cal Q}_G$ with level $\ell$. If $\ell\neq 1$, let $p$ be any prime divisor of $\ell$. If $p=2$, according to Theorem~\ref{LL1}, we have $2\nmid \ell$; a contradiction. If $p$ is an odd prime, by Theorem~\ref{LL}, $p\nmid\ell$; a contradiction. Thus, $\ell=1$ and $Q$ is a permutation matrix. Thus, ${\cal{Q}}_G$ contains only permutation matrices and $G$ is DGS.

\subsection{Some facts about the discriminant and the resultant}
\label{secDiscr}

Let $f(x)$ and $g(x)$ be two polynomials over $\mathbb{Z}$. It is well known that
whether $f$ and $g$ have a common factor of degree larger than zero is closely related to \emph{the resultant} of $f$ and $g$.

Suppose that $\gcd(f(x),g(x))=d(x)$ with $\deg(d)>0$. Then there exist two polynomials $u(x)$ and $v(x)$ such that $f(x)=v(x)d(x)$ and $g(x)=u(x)d(x)$, where
$\deg(u)<\deg(g)$ and $\deg(v)<\deg(f)$. Thus, we have
\begin{equation}\label{EQ}
u(x)f(x)=v(x)g(x).
\end{equation}

Let
\begin{align*}
f(x)&=a_0x^n+a_1x^{n-1}+\cdots+a_{n-1}x+a_n,\\
g(x)&=b_0x^m+b_1x^{m-1}+\cdots+b_{m-1}x^x+b_m,\\
u(x)&=u_0x^{m-1}+u_1x^{m-2}+\cdots+u_{m-2}x+u_{m-1},\\
v(x)&=v_0x^{n-1}+v_1x^{n-2}+\cdots+v_{n-2}x+v_{n-1}.
\end{align*}
Comparing the coefficients of Eq.~(\ref{EQ}) gives $M^T\eta=0$, where
\[M=\begin{bmatrix}
a_0&a_1&\cdots&\cdots&\cdots&a_n\\
&a_0&a_1&\cdots&\cdots&\cdots&a_n\\
&&\cdots&\cdots&\cdots&\cdots\\
&&&a_0&a_1&\cdots&\cdots&\cdots&a_n\\
b_0&b_1&\cdots&\cdots&b_{m}\\
&b_0&b_1&\cdots&\cdots&b_{m}\\
&&\cdots&\cdots&\cdots&\cdots\\
&&&&b_0&b_1&\cdots&\cdots&b_{m}
\end{bmatrix}\]
is the Sylvester matrix and $\eta =(u_0,u_1,\cdots,u_{n-2},-v_0,-v_1,\cdots,-v_{n-1})^T$. Clearly, if $\eta\neq 0$, then $\det(M)=0$.
Thus, we reach the following definition:
\begin{Defi} Let $f$ and $g$ be non-zero polynomials over some field $\mathbb{K}$. Then the resultant of $f$ and $g$, denoted by $\res(f,g)$, is defined to be the determinant of the Sylvester matrix $M$.
\end{Defi}

\begin{Defi}Let $f(x)=x^n+a_1x^{n-1}+\cdots +a_{n-1}x+a_n$ be a polynomial over $\mathbb{Z}$. The discriminant of $f$ is defined to be
$$\Delta(f)=\prod_{i<j}(\alpha_i-\alpha_j)^2,$$
\end{Defi}
where $\alpha_i$'s are the roots of $f$ over $\mathbb{C}$.

\begin{Defi}Let $\phi(x)=\det(x I-A)$ be the characteristic polynomial of matrix $A$. The discriminant of $A$, denoted by $\Delta_A$, is defined to be ${\Delta}(\phi)$.
\end{Defi}

There is a close relation between the discriminant of a polynomial $f$ and the resultant of $f$ and $f'$.

\begin{them}[see e.g., Lang~\cite{Lang}] Let $f(x)$ be a polynomial with leading coefficient $a_0=1$. Then $\Delta(f)=(-1)^{n(n-1)/2}\res(f,f')$.
\end{them}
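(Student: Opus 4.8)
The plan is to reduce the identity to the product (evaluation) formula for the resultant followed by an elementary sign count. First I would recall, or establish from the Sylvester-matrix definition given above, the classical evaluation formula: if $f$ is monic of degree $n$ with complex roots $\alpha_1,\dots,\alpha_n$, then for every polynomial $g$,
$$\res(f,g)=\prod_{i=1}^{n} g(\alpha_i).$$
This is the only substantive input; it can be proved by writing $f(x)=\prod_{i}(x-\alpha_i)$ and performing column operations on the Sylvester matrix to factor its determinant, or simply cited from Lang. Applying it with $g=f'$ (so $\deg g=n-1$, and the leading coefficient $a_0=1$ makes the prefactor $a_0^{\,n-1}$ disappear) gives $\res(f,f')=\prod_{i=1}^{n} f'(\alpha_i)$.

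Next I would evaluate $f'$ at each root directly from the factored form. Differentiating $f(x)=\prod_{k=1}^{n}(x-\alpha_k)$ by the product rule gives $f'(x)=\sum_{j=1}^{n}\prod_{k\neq j}(x-\alpha_k)$, and substituting $x=\alpha_i$ annihilates every summand except $j=i$, since each other term carries a factor $\alpha_i-\alpha_i$. Hence
$$f'(\alpha_i)=\prod_{k\neq i}(\alpha_i-\alpha_k).$$
Multiplying over all $i$ then yields $\res(f,f')=\prod_{i\neq k}(\alpha_i-\alpha_k)$, a product over all \emph{ordered} pairs of distinct indices.

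The final step is the sign bookkeeping that produces the factor $(-1)^{n(n-1)/2}$. Each unordered pair $\{i,j\}$ with $i<j$ contributes exactly two factors to the ordered product, namely $(\alpha_i-\alpha_j)$ and $(\alpha_j-\alpha_i)=-(\alpha_i-\alpha_j)$, whose product is $-(\alpha_i-\alpha_j)^2$; since there are precisely $\binom{n}{2}=n(n-1)/2$ such pairs, I obtain
$$\res(f,f')=\prod_{i<j}\bigl(-(\alpha_i-\alpha_j)^2\bigr)=(-1)^{n(n-1)/2}\prod_{i<j}(\alpha_i-\alpha_j)^2=(-1)^{n(n-1)/2}\,\Delta(f).$$
Multiplying both sides by $(-1)^{n(n-1)/2}$, which equals its own inverse, gives $\Delta(f)=(-1)^{n(n-1)/2}\res(f,f')$, as claimed. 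I expect the only genuine obstacle to be justifying the evaluation formula for $\res(f,g)$ from the determinant definition; once that is in hand, the differentiation identity for $f'(\alpha_i)$ and the pairing argument for the sign are entirely routine.
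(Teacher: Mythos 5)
Your proof is correct. The paper does not actually prove this statement—it cites it from Lang—and your argument (the evaluation formula $\res(f,g)=\prod_{i}g(\alpha_i)$ for monic $f$, then $f'(\alpha_i)=\prod_{k\neq i}(\alpha_i-\alpha_k)$, then the sign count of $-1$ over the $\binom{n}{2}$ unordered pairs) is exactly the standard proof found in that reference, so the two approaches coincide; note also that your argument remains valid when $f$ has repeated roots, since then both $\Delta(f)$ and $\res(f,f')$ vanish.
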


Thus, we have $\Delta_A=\pm \res(\phi,\phi')$, which provides a better way to compute $\Delta_A$ through computing the determinant of the Sylveter matrix associated with $\phi$.
The following Theorem will be used frequently in the proof of Theorem~\ref{Main1}.
\begin{them}[see e.g., Lang~\cite{Lang}]\label{divisor} $f(x)\in{\mathbb{Z}[x]}$ has multiple factors over $\mathbb{F}_p$ if and only if $p|\res(f,f')$, or equivalently $\res(f,f')=0$ over $\mathbb{F}_p$.
\end{them}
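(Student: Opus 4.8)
The plan is to reduce the claim to a statement purely over the field $\mathbb{F}_p$ and then to prove that statement via the classical linear-algebra interpretation of the Sylvester determinant. Since the characteristic polynomial $\phi$ (the case of interest) is monic, I will take $f$ monic of degree $n\geq 1$; the same argument works for any $f$ whose leading coefficient is not divisible by $p$. The first step is the observation that reduction modulo $p$, being a ring homomorphism $\mathbb{Z}\to\mathbb{F}_p$, commutes with every polynomial expression in the matrix entries, and in particular with the determinant. Because the Sylvester matrix of $f$ and $f'$ has entries equal to the coefficients of $f$ and of $f'$, and because differentiation commutes with reduction (the coefficients of $f'$ are integer multiples of those of $f$), the reduced matrix is exactly the Sylvester matrix of $\bar f$ and $(\bar f)'$ over $\mathbb{F}_p$. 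Hence $\res(f,f')\equiv \det\big(\mathrm{Syl}(\bar f,(\bar f)')\big)\pmod p$, and it suffices to show that this determinant vanishes over $\mathbb{F}_p$ if and only if $\bar f$ has a repeated irreducible factor.

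For the field statement I would use the standard description of the Sylvester matrix as the matrix of the linear map $(v,u)\mapsto v\bar f+u(\bar f)'$, where $v$ and $u$ range over polynomials of degree less than $n-1$ and less than $n$ respectively, a map between two $\mathbb{F}_p$-spaces of the same dimension $2n-1$. Its determinant vanishes precisely when the map has a nontrivial kernel, i.e. when there is a nonzero pair with $v\bar f=-u(\bar f)'$. If $\gcd(\bar f,(\bar f)')=e$ has positive degree, writing $\bar f=ef_1$ and $(\bar f)'=eg_1$ and taking $(v,u)=(g_1,-f_1)$ produces such a relation while respecting the degree bounds; conversely, if the gcd is $1$ then $\bar f\mid u(\bar f)'$ forces $\bar f\mid u$, and the bound $\deg u<n$ then forces $u=0$ and hence $v=0$. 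Thus $\det(\mathrm{Syl})=0$ if and only if $\gcd(\bar f,(\bar f)')$ has positive degree, the exceptional case $(\bar f)'=0$ over $\mathbb{F}_p$ falling here too, since then $(v,u)=(0,1)$ lies in the kernel.

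Finally I would translate ``$\gcd(\bar f,(\bar f)')\neq 1$'' into ``$\bar f$ has a repeated factor''. Factoring $\bar f=\prod_i p_i^{e_i}$ into distinct irreducibles, the product rule shows that $p_i\mid(\bar f)'$ whenever $e_i\geq 2$, so a repeated factor forces a common divisor; for the converse I would use that $\mathbb{F}_p$ is perfect, so every irreducible $p_i$ is separable and satisfies $(p_i)'\neq 0$ and $\gcd(p_i,p_i')=1$, whence a squarefree $\bar f$ has $\gcd(\bar f,(\bar f)')=1$ (and if $(\bar f)'=0$ then $\bar f$ is a $p$-th power, which is certainly not squarefree). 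I expect the main obstacle to be the degree drop that occurs when $p\mid n$: then the true degree of $(\bar f)'$ is smaller than $n-1$, so the reduced Sylvester matrix acquires leading zeros and the naive ``resultant vanishes iff common root'' statement no longer applies verbatim. The kernel argument above is chosen precisely because it is insensitive to this drop, working only with the formal degrees built into the Sylvester matrix, so that the degenerate cases $\deg(\bar f)'<n-1$ and $(\bar f)'=0$ are absorbed without extra bookkeeping.
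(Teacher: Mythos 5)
The paper offers no proof of this theorem for you to be compared against: it is invoked as a standard fact with a pointer to Lang's \emph{Algebra}, and is never proved internally. Your argument is a correct, self-contained proof of the cited fact, and it is organized around exactly the two points where a careless argument would break. First, the reduction step $\res(f,f') \bmod p = \det\bigl(\mathrm{Syl}(\bar f,(\bar f)')\bigr)$ is only valid for the \emph{formal} Sylvester matrix built with degrees $n$ and $n-1$: when $p \mid n$ the reduced derivative $(\bar f)'$ drops degree, and the true-degree resultant of $\bar f$ and $(\bar f)'$ over $\mathbb{F}_p$ is then a different quantity, so your decision to argue through the kernel of the fixed-shape matrix (rather than quoting ``resultant vanishes iff common root'') is what makes the degenerate cases $\deg (\bar f)' < n-1$ and $(\bar f)'=0$ come for free; your explicit kernel vectors and degree checks in both directions are correct. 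Second, your standing hypothesis that the leading coefficient of $f$ is a unit modulo $p$ is not a convenience but a necessity: as printed, for arbitrary $f\in\mathbb{Z}[x]$, the statement is false --- for $f(x)=px^2+x$ one computes $\res(f,f')=-p$, which is divisible by $p$, while $\bar f = x$ is squarefree over $\mathbb{F}_p$. Since the paper only ever applies the theorem to characteristic polynomials, which are monic, the missing hypothesis is harmless in context, but your proof correctly makes explicit where monicity is used (both in the reduction step and in concluding $u=0$ from $\bar f \mid u$).
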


\section{The level of $Q$ and the discriminant $\Delta_A$}

Given $A\in{S_n(\mathbb{Z})}$. Let $Q\in{O_n(\mathbb{Q})}$ with level $\ell$ such that $Q^TAQ=B\in S_n(\mathbb{Z})$. Let $p$ be any prime factor of $\ell$. The main objective of this section is to show that $p$ is always a divisor of $\Delta_A$.

Note that $\bar{Q}=\ell Q$ is an integral matrix. We can assume that $q_1,q_2\cdots,q_s$ to be a maximal independent system of the column vectors of $\bar{Q}$ modulo $p$. It follows from $A\bar{Q}=\bar{Q}B$ that
\begin{equation}\label{MM}
Aq_i=\tilde{b}_{i1}q_1+\tilde{b}_{i2}q_2+\cdots+\tilde{b}_{is}q_s,~i=1,2,\cdots,s,
\end{equation}
where $\tilde{B}:=(\tilde{b}_{ij})$ is an $s$ by $s$ matrix over the finite field $\mathbb{F}_p$. Note that $Q$ is an orthogonal matrix, we obtain
\begin{equation}\label{GG}
q_i^Tq_j=0~\mbox{over}~\mathbb{F}_p,~i,j=1,2,\cdots,s.
\end{equation}

\begin{lemma}\label{L01}
Assume that Eqs. (\ref{MM}) and (\ref{GG}) hold. Let $\psi(x)$ be the characteristic polynomial of matrix $\tilde{B}$ over $\mathbb{F}_p$. Then $\rank_p(\psi(A)^2)\leq n-s-1$.
\end{lemma}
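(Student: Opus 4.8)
The plan is to reduce everything modulo $p$ and argue with the geometry of the invariant subspace $W:=\operatorname{span}_{\mathbb{F}_p}\{q_1,\dots,q_s\}\subseteq\mathbb{F}_p^n$, which has dimension $s$ since the $q_i$ are chosen independent modulo $p$; note $s\ge 1$ because $p\mid\ell$ and the minimality of the level force $\bar Q=\ell Q\not\equiv 0\pmod p$. Equation~(\ref{MM}) says precisely that $AW\subseteq W$, so $W$ is $A$-invariant, and reading off coordinates shows that the matrix of the restriction $A|_W$ in the basis $\{q_i\}$ is $\tilde B^{T}$. Since the characteristic polynomial of $\tilde B^{T}$ equals that of $\tilde B$, namely $\psi$, the Cayley--Hamilton theorem applied to $A|_W$ gives $\psi(A|_W)=0$; because $W$ is invariant this means $\psi(A)w=0$ for every $w\in W$. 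Writing $N:=\psi(A)$, I thus obtain the first key fact $W\subseteq\ker N$, and hence $W\subseteq\ker(N^2)$.

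Two further structural facts will drive the argument. First, $A$ is symmetric over $\mathbb{F}_p$, hence so is any polynomial $N=\psi(A)$, and for the non-degenerate standard bilinear form on $\mathbb{F}_p^n$ one has the identity $\ker N=(\operatorname{im}N)^{\perp}$ (and, by double-perp, $\operatorname{im}N=(\ker N)^{\perp}$). Second, equation~(\ref{GG}) says $q_i^{T}q_j=0$, i.e. $W$ is totally isotropic: $W\subseteq W^{\perp}$. Since $\rank_p(N^2)=n-\dim\ker(N^2)$, the target inequality $\rank_p(N^2)\le n-s-1$ is equivalent to $\dim\ker(N^2)\ge s+1$, so it suffices to exhibit one dimension of kernel beyond the already-established inclusion $W\subseteq\ker(N^2)$.

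For this I would use the identity $\dim\ker(N^2)=\dim\ker N+\dim\!\bigl(\operatorname{im}N\cap\ker N\bigr)$, obtained by applying rank--nullity to $N$ restricted to $\ker(N^2)$: its kernel is $\ker N$, and its image is exactly $\operatorname{im}N\cap\ker N$. If $\dim\ker N\ge s+1$ we are already done. Otherwise $\dim\ker N=s$, which forces $\ker N=W$; then $\operatorname{im}N=(\ker N)^{\perp}=W^{\perp}$, and isotropy gives $\operatorname{im}N\cap\ker N=W^{\perp}\cap W=W$, of dimension $s\ge 1$. In either case $\dim\ker(N^2)\ge s+1$, completing the proof. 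I expect the main obstacle to be exactly this final ``$+1$'': the naive estimate only yields $\rank_p(N^2)\le\rank_p(N)\le n-s$, and squeezing out the last dimension is where the symmetry of $A$ (to identify $\ker N$ with $(\operatorname{im}N)^{\perp}$) and the orthogonality condition~(\ref{GG}) (to force $W\subseteq\operatorname{im}N\cap\ker N$ in the tight case) must be combined, as neither ingredient alone suffices.
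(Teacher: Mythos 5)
Your proof is correct, and it reaches the conclusion by a genuinely different organization than the paper's. The paper splits into two cases according to whether some $q_i$ lies in the column space of $\psi(A)$: if $q_1=\psi(A)\xi_1$, it checks by hand that $\xi_1,q_1,\dots,q_s$ are linearly independent vectors annihilated by $\psi(A)^2$; if no $q_i$ lies in that column space, it bounds $\rank_p(\psi(A))$ itself by $n-s-1$, by noting that appending the columns $q_1,\dots,q_s$ to $\psi(A)$ raises the rank while the relations $q_i^{T}[\psi(A),q_1,\dots,q_s]=0$ cap the augmented rank at $n-s$. You instead split on whether $\dim\ker\psi(A)$ exceeds $s$ or equals $s$, and in the tight case combine the duality $\ker N=(\operatorname{im}N)^{\perp}$ with the identity $\dim\ker(N^2)=\dim\ker N+\dim(\operatorname{im}N\cap\ker N)$ and the isotropy of $W$. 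The two arguments consume exactly the same two ingredients --- the symmetry of $\psi(A)$ and the orthogonality relations~(\ref{GG}) --- but yours is more structural: it avoids the explicit independence verification of the paper's Case 1, makes transparent why the extra ``$+1$'' must come from the interaction of symmetry and isotropy, and in the tight case actually yields the stronger bound $\dim\ker(\psi(A)^2)\ge 2s$. You are also right to flag, as the paper does not explicitly, that $s\ge1$ follows from the minimality of the level when $p\mid\ell$; without this the ``$-1$'' in the statement would be vacuous or false.
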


\begin{proof}
Note that $A[q_1,q_2,\cdots,q_s]=[q_1,q_2,\cdots,q_s]B$. It follows that
\[\psi(A)[q_1,q_2,\cdots,q_s]=[q_1,q_2,\cdots,q_s]\psi(\tilde{B})=O.\]
This is because $\psi(\tilde{B})=O$ according to Cayley-Hamilton Theorem. Thus, we get $\psi(A)q_i=O$ for $i=1,2,\cdots,s$. In view of Eq.~(\ref{GG}), this implies that
\begin{equation}\label{KML}
q_i^T[\psi(A),q_1,q_2,\cdots,q_s]=O,{\rm for}~i=1,2,\cdots,s.
\end{equation}
Next, we distinguish the following two cases:

\textbf{Case 1.} There exists some $q_i$, say $q_1$, which can be expressed as the linear combination of the column vectors of the matrix $\psi(A)$, i.e., $q_1=\psi(A)\xi_1$ for some $0\neq \xi_1\in{\mathbb{F}_p^n}$. We claim that $\xi_1,q_i~(i=1,2,\cdots,s)$ are linearly independent over $\mathbb{F}_p$. For contradiction, suppose that there exist some constants $l_1,c_i\in{\mathbb{F}_p}$ that are not equal to zero simultaneously such that
\begin{equation}\label{PPO}
\sum_{k=1}^s c_kq_k+l_1\xi_1=0.
\end{equation}
Left multiplying both sides of Eq. (\ref{PPO}) by $\psi(A)$ gives
\[\sum_{k=1}^sc_k\psi(A)q_k+l_1\psi(A)\xi_1=l_1\psi(A)\xi_1=l_1q_1=0.\]
It follows that $l_1=0$. Then by Eq.~(\ref{PPO}) we get $c_k=0$ for $k=1,2,\cdots,s$; a contradiction. Thus, $q_1,\xi_i~(i=1,2,\cdots,s)$ are linearly independent. Note that $\psi(A)^2\xi_1=0,\psi(A)^2q_i=0~(i=1,2,\cdots,s)$, the conclusion that $\rank_p(\psi(A)^2)\leq n-s-1$ follows immediately by  the linearly independence of $q_1,\xi_i~(i=1,2,\cdots,s)$.

\textbf{Case 2}. Suppose none of $q_i$ can be expressed as a linear combinations of the column vectors of the matrix $\psi(A)$, for $i=1,2,\cdots,s$. Then we have
\[\rank_p(\psi(A))+1\leq \rank_p([\psi(A),q_1,q_2,\cdots,q_s]).\]
Moreover, it follows from Eq. (\ref{KML}) that $\rank_p([\psi(A),q_1,q_2,\cdots,q_s])\leq n-s$. Therefore, we have $\rank_p(\psi(A)^2)\leq \rank_p(\psi(A))\leq n-s-1$.

Combining Cases 1 and 2, the lemma follows. This completes the proof.
\end{proof}

\begin{lemma}\label{XS}
Given $A\in{S_n(\mathbb{Z})}$. Let $Q\in{O_n(\mathbb{Q})}$ with level $\ell$ such that $Q^TAQ=B\in S_n(\mathbb{Z})$. Let $p$ be any prime factor of $\ell$. Then the
characteristic polynomial $\phi(x)$ of the matrix $A$ must have a
multiple factor over $\mathbb{F}_p$.
\end{lemma}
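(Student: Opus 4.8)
The plan is to feed the rank bound of Lemma~\ref{L01} into a count of the kernel dimension of $\psi(A)^2$ through the generalized eigenspace decomposition of $A$, thereby forcing $A$ to have a repeated eigenvalue over $\bar{\mathbb{F}}_p$, which is precisely the assertion that $\phi$ has a multiple factor over $\mathbb{F}_p$. The whole argument hinges on the fact that the rank drop guaranteed by Lemma~\ref{L01} is strictly larger than the degree $s$ of $\psi$.

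First I would reuse the objects already set up in Eqs.~(\ref{MM}) and (\ref{GG}): with $q_1,\dots,q_s$ and $\tilde B$ as there, let $\psi$ be the characteristic polynomial of $\tilde B$ over $\mathbb{F}_p$, so $\deg\psi=s$. I would record that $s\geq 1$: if $s=0$ then $\bar Q=\ell Q\equiv 0~({\rm mod}~p)$, so $(\ell/p)Q$ is integral, contradicting the minimality of the level $\ell$ (recall $p\mid\ell$). Lemma~\ref{L01} then gives $\rank_p(\psi(A)^2)\leq n-s-1$, equivalently $\dim_{\mathbb{F}_p}\ker\psi(A)^2\geq s+1$. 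Since the dimension of a kernel is unchanged under field extension, I would view $A$ as a matrix over $\bar{\mathbb{F}}_p$ from now on.

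Next I would decompose $\bar{\mathbb{F}}_p^{\,n}=\bigoplus_\lambda V_\lambda$ into the generalized eigenspaces of $A$, where $\lambda$ runs over the distinct eigenvalues of $A$ (the roots of $\phi$ over $\bar{\mathbb{F}}_p$) and $\dim V_\lambda=m_\lambda$ is the algebraic multiplicity of $\lambda$. As $\psi(A)$ commutes with $A$, it preserves each $V_\lambda$, and on $V_\lambda$ every eigenvalue of $\psi(A)$ equals $\psi(\lambda)$; hence $\psi(A)^2|_{V_\lambda}$ is invertible whenever $\psi(\lambda)\neq 0$ and contributes nothing to the kernel. Therefore
\[
\dim\ker\psi(A)^2=\sum_{\lambda:\,\psi(\lambda)=0}\dim\ker\bigl(\psi(A)^2|_{V_\lambda}\bigr)\leq\sum_{\lambda:\,\psi(\lambda)=0}m_\lambda ,
\]
and combined with the lower bound this yields $\sum_{\lambda:\,\psi(\lambda)=0}m_\lambda\geq s+1$. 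The eigenvalues $\lambda$ appearing here are roots of $\psi$, of which there are at most $\deg\psi=s$ distinct ones, so the index set of the sum has at most $s$ elements. Since a sum of the positive integers $m_\lambda$ over at most $s$ indices exceeds $s$, at least one such $\lambda$ satisfies $m_\lambda\geq 2$; that is, some root of $\phi$ is repeated over $\bar{\mathbb{F}}_p$, which is exactly the statement that $\phi$ has a multiple factor over $\mathbb{F}_p$ (equivalently $p\mid\res(\phi,\phi')$ by Theorem~\ref{divisor}).

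The step requiring the most care, and the main obstacle, is the upper bound on $\dim\ker\psi(A)^2$ via the generalized eigenspace decomposition: one must pass to $\bar{\mathbb{F}}_p$ (where $A$ is triangularizable, unlike over $\mathbb{F}_p$), check that kernel dimensions are preserved under the extension $\mathbb{F}_p\hookrightarrow\bar{\mathbb{F}}_p$, and verify that $\psi(A)|_{V_\lambda}$ is invertible exactly when $\psi(\lambda)\neq 0$. Once this counting inequality is secured, the pigeonhole conclusion is immediate, and the only remaining point is the standard, Galois-conjugacy-respecting translation between ``$\phi$ has a repeated root over $\bar{\mathbb{F}}_p$'' and ``$\phi$ has a multiple factor over $\mathbb{F}_p$.''
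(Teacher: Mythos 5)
Your proof is correct, but it completes the argument by a genuinely different route from the paper's. Both proofs take the rank bound $\rank_p(\psi(A)^2)\le n-s-1$ of Lemma~\ref{L01} as the key input (and your preliminary observation that $s\ge 1$, which the paper leaves implicit, is a worthwhile addition). The paper then stays over $\mathbb{F}_p$: it extends $q_1,\dots,q_s$ to a basis of $\mathbb{F}_p^n$, obtains a block upper-triangular form with diagonal blocks $\tilde B$ and $D$, factors $\phi=\psi\chi$ with $\chi$ the characteristic polynomial of $D$, and argues that if $\gcd(\psi,\chi)=1$ then a B\'ezout identity together with $\chi(D)=O$ makes $\psi(D)$ invertible, forcing $\rank_p(\psi(A)^2)=n-s$ and contradicting Lemma~\ref{L01}; the multiple factor of $\phi$ then arises from the common factor of $\psi$ and $\chi$. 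You instead pass to $\bar{\mathbb{F}}_p$, convert the rank bound into $\dim\ker\psi(A)^2\ge s+1$, localize this kernel inside the generalized eigenspaces $V_\lambda$ with $\psi(\lambda)=0$, and apply pigeonhole: at most $s$ distinct roots of $\psi$ must carry total algebraic multiplicity at least $s+1$ in $\phi$, so some root of $\phi$ is repeated. Your route trades the paper's basis extension and coprimality case split for the generalized eigenspace decomposition over the algebraic closure, together with the facts (which you correctly flag as the delicate points) that kernel dimension is stable under field extension and that $\mathbb{F}_p$ is perfect, so a repeated root over $\bar{\mathbb{F}}_p$ is equivalent to a repeated irreducible factor over $\mathbb{F}_p$. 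The paper's argument is more elementary and self-contained; yours is more conceptual and makes the counting mechanism behind the lemma transparent.
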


\begin{proof} Since $q_1,q_2,\cdots,q_s$ are linearly independent over $\mathbb{F}_p$, there exist vectors $u_1,u_2,\cdots,u_{n-s}$ such that $q_i,u_{n-i}~(i=1,2,\cdots,s)$ form a basis for $\mathbb{F}_p^n$. Write $U=[q_1,\cdots,q_s,u_1,\cdots,u_{n-s}].$ Then by Eq. (\ref{MM}), we have
\begin{equation}\label{GM}
AU=U\begin{bmatrix}\tilde{B}&C\\O&D\end{bmatrix},
\end{equation}
where $C$ is an $s$ by $n-s$ matrix, and $D$ is an $n-s$ by $n-s$ matrix, over $\mathbb{F}_p$. Note that $U$ is non-singular, it follows from Eq. (\ref{GM}) that
\begin{equation}\label{GM1}
U^{-1}\psi(A)^2U=\begin{bmatrix}O&C^{*}\\O&\psi(D)^2\end{bmatrix},
\end{equation}
where we have used Cayley-Hamilton Theorem which implies that $\psi(\tilde{B})=O$, with $\psi(x)$ being the characteristic polynomial of the matrix $\tilde{B}$. By Eq.~(\ref{GM}), we have $\phi(x)=\psi(x)\chi(x)$ with $\chi(x)$ being the characteristic polynomial of the matrix $D$. If $\gcd(\psi(x),\chi(x))\neq 1$, then clearly $\phi(x)$ has a multiple factor. If not, there exist polynomials $h_1(x),h_2(x)\in{\mathbb{F}_p}[x]$ such that $h_1(x)\psi(x)+h_2(x)\chi(x)=1$. Using the fact that $\chi(D)=O$, we have $h_1(D)\psi(D)=I_{n-s}$, and hence $\rank_p(\psi(D)^2)=n-s$. It follows from Eq.~(\ref{GM1}) that $\rank_p(\psi(A)^2)=\rank_p(\psi(D)^2)=n-s$, which contradicts Lemma \ref{L01}. Thus $\phi(x)$ must have a multiple factor. This completes the proof.
\end{proof}

The following theorem shows that every prime divisor of $\ell$ is a divisor of $\Delta_A$.
\begin{them}\label{ellRes}
Given $A\in{S_n(\mathbb{Z})}$. Suppose that $Q^TAQ\in S_n(\mathbb{Z})$ for some $Q\in{O_n(\mathbb{Q})}$ with level $\ell$. Let $p$ be any prime factor of $\ell$. Then $p|\Delta_A$.
\end{them}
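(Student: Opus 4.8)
The plan is to connect the existence of a multiple factor of the characteristic polynomial $\phi$ over $\mathbb{F}_p$ (which we already have from Lemma~\ref{XS}) to the divisibility $p\mid\Delta_A$, using the resultant machinery assembled in Section~\ref{secDiscr}. Specifically, Lemma~\ref{XS} tells us that whenever $p$ is a prime factor of the level $\ell$, the characteristic polynomial $\phi(x)$ of $A$ has a multiple factor when reduced modulo $p$. So the entire theorem reduces to translating ``$\phi$ has a multiple factor over $\mathbb{F}_p$'' into ``$p\mid\Delta_A$''.

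The key step is to invoke Theorem~\ref{divisor}, which states that $f(x)\in\mathbb{Z}[x]$ has multiple factors over $\mathbb{F}_p$ if and only if $p\mid\res(f,f')$, equivalently $\res(f,f')=0$ over $\mathbb{F}_p$. Applying this with $f=\phi$, the characteristic polynomial of $A$, the fact that $\phi$ has a multiple factor over $\mathbb{F}_p$ immediately yields $p\mid\res(\phi,\phi')$. Since the paper has established that $\Delta_A=\pm\res(\phi,\phi')$ (from the relation $\Delta(f)=(-1)^{n(n-1)/2}\res(f,f')$ for monic $f$, and $\phi$ is monic because it is a characteristic polynomial), we conclude $p\mid\Delta_A$. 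One minor point worth stating explicitly is that $\phi$ is indeed monic with integer coefficients, so both Theorem~\ref{divisor} and the discriminant-resultant identity apply directly.

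The proof is therefore essentially a two-line chain: $p\mid\ell$ gives a multiple factor of $\phi$ over $\mathbb{F}_p$ by Lemma~\ref{XS}, which gives $p\mid\res(\phi,\phi')$ by Theorem~\ref{divisor}, which gives $p\mid\Delta_A$ by $\Delta_A=\pm\res(\phi,\phi')$. There is no genuine obstacle here, since all the hard analytic work was already done in Lemmas~\ref{L01} and \ref{XS}, where the orthogonality relations (\ref{GG}) and the rank estimate on $\psi(A)^2$ were used to force the factorization $\phi=\psi\chi$ with $\gcd(\psi,\chi)\neq 1$. The only thing I would double-check is that the statement of Lemma~\ref{XS} matches the hypotheses I need verbatim—in particular that it already assumes $Q^TAQ\in S_n(\mathbb{Z})$ with $Q\in O_n(\mathbb{Q})$ of level $\ell$ and $p\mid\ell$—so that the present theorem is a clean corollary obtained by feeding its conclusion into Theorem~\ref{divisor}.

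If I wanted to make the argument self-contained rather than citing Theorem~\ref{divisor} as a black box, I would recall the standard fact that reduction modulo $p$ commutes with the resultant (the Sylvester determinant is a polynomial in the coefficients, so $\res(\phi,\phi')\bmod p=\res(\bar\phi,\bar\phi')$ computed over $\mathbb{F}_p$), and then use that a squarefree polynomial over a field has $\gcd(f,f')=1$, hence nonzero resultant, while a polynomial with a repeated factor shares that factor with its derivative, forcing the resultant to vanish. But given that Theorem~\ref{divisor} is stated in the excerpt precisely for this purpose, the cleanest route is simply to apply it.
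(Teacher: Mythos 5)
Your proposal is correct and is exactly the paper's argument: the paper's proof of Theorem~\ref{ellRes} reads, in full, ``This follows from Lemma~\ref{XS} and Theorem~\ref{divisor} directly,'' which is precisely the two-step chain you describe. Your added remarks (that $\phi$ is monic integral and that reduction mod $p$ commutes with the Sylvester determinant) only make explicit what the paper leaves implicit.
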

\begin{proof}This follows from Lemma~\ref{XS} and Theorem~\ref{divisor} directly.
\end{proof}

The following lemma plays a key role in the proof of Theorem~\ref{Main1}.
\begin{lemma}\label{LLX}
Let $Q$ be a rational orthogonal matrix with level $\ell$ such that $Q^TAQ=B\in{S_n(\mathbb{Z})}$. Let $p$ be a prime factor of $\ell$. Suppose that \begin{eqnarray}
\phi(x)=(x-\lambda_0)^2\varphi(x),~{\rm over}~\mathbb{F}_p,
 \end{eqnarray}
 where $\varphi(x)$ is square-free over $\mathbb{F}_p$. Let $q$ be an eigenvector of $A$ associated with $\lambda_0$ over $\mathbb{F}_p$, i.e., $Aq\equiv \lambda_0q~({\rm mod}~p$). The we have $q^T(Aq-\lambda_0q)\equiv 0~({\rm mod}~p^2)$.
\end{lemma}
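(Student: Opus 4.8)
The plan is to avoid a purely local analysis at $\lambda_0$ and instead exploit the \emph{exact} integral identities carried by $\bar{Q}:=\ell Q$. Put $N=A-\lambda_0 I$ (lifting $\lambda_0$ to an integer), so that $N$ is integral, symmetric, and $Nq\equiv 0\pmod p$; the target is $q^{T}Nq\equiv 0\pmod{p^2}$. A first, routine observation is that $q^{T}Nq \bmod p^2$ depends only on the residue $q\bmod p$: replacing $q$ by $q+pt$ changes $q^{T}Nq$ by $p\,t^{T}Nq+p\,q^{T}Nt+p^{2}t^{T}Nt$, and since $Nq\equiv 0\pmod p$ and $N=N^{T}$ each term is $\equiv 0\pmod{p^2}$. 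Hence it suffices to evaluate $q^{T}Nq$ on any convenient integral lift of $q\bmod p$.

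The engine of the argument is that $\bar{Q}$ is integral with $\bar{Q}^{T}\bar{Q}=\ell^{2}I$ and $A\bar{Q}=\bar{Q}B$, whence $\bar{Q}^{T}A\bar{Q}=\ell^{2}B$. Because $p\mid\ell$ we have $\ell^{2}\equiv 0\pmod{p^2}$, so \emph{both} $\bar{Q}^{T}\bar{Q}$ and $\bar{Q}^{T}A\bar{Q}$ vanish modulo $p^2$. Consequently, if $q\bmod p$ lies in the column space of $\bar{Q}$ modulo $p$, say $q\equiv\bar{Q}x\pmod p$ for an integer vector $x$, then the lift $q':=\bar{Q}x$ satisfies $q'^{T}q'=\ell^{2}x^{T}x\equiv 0$ and $q'^{T}Aq'=\ell^{2}x^{T}Bx\equiv 0\pmod{p^2}$, so that $q'^{T}(Aq'-\lambda_0 q')\equiv 0\pmod{p^2}$; by the previous paragraph the same congruence holds for $q$. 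Thus the entire lemma reduces to the single structural claim that $q\bmod p$ lies in $\mathcal{C}:=\mathrm{span}_{\mathbb{F}_p}\{q_1,\dots,q_s\}$, the column space of $\bar{Q}$ modulo $p$.

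To reach this claim I would analyse $\mathcal{C}$ directly. By Eq.~(\ref{MM}) it is $A$-invariant with $A|_{\mathcal{C}}$ represented by $\tilde{B}$, whose characteristic polynomial is $\psi$; by Eq.~(\ref{GG}) it is totally isotropic, so $\mathcal{C}\subseteq\mathcal{C}^{\perp}$, and since $A$ is symmetric $\mathcal{C}^{\perp}$ is $A$-invariant as well. The nondegenerate standard form identifies $\mathbb{F}_p^{n}/\mathcal{C}^{\perp}$ with the dual of $\mathcal{C}$, on which $A$ acts as the adjoint of $A|_{\mathcal{C}}$ and hence again has characteristic polynomial $\psi$. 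Multiplying the characteristic polynomials of the three graded pieces of the flag $0\subset\mathcal{C}\subset\mathcal{C}^{\perp}\subset\mathbb{F}_p^{n}$ gives $\phi=\psi^{2}\omega$ over $\mathbb{F}_p$, where $\omega$ is the characteristic polynomial of $A$ on $\mathcal{C}^{\perp}/\mathcal{C}$. Since $\lambda_0$ is the only repeated root of $\phi=(x-\lambda_0)^{2}\varphi$ (with $\varphi$ square-free and, as we may assume, $\varphi(\lambda_0)\neq 0$, so $\lambda_0$ is exactly double), every root of $\psi$ must equal $\lambda_0$; combined with $s\ge 1$ (as $\bar{Q}\not\equiv 0\pmod p$ by minimality of $\ell$) this forces $\psi=(x-\lambda_0)$, $s=1$, and $\lambda_0\nmid\omega$. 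In particular $\mathcal{C}=\langle q_1\rangle$, and $q_1$ is itself a $\lambda_0$-eigenvector lying in the column space of $\bar{Q}$.

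The remaining and, I expect, hardest point is to show that $q_1$ already spans the whole $\lambda_0$-eigenspace modulo $p$, i.e. that the geometric multiplicity of $\lambda_0$ is $1$; only then is an arbitrary eigenvector $q$ a scalar multiple of $q_1$ modulo $p$, placing it in $\mathcal{C}$. This cannot be settled locally: a symmetric matrix may have a two-dimensional eigenspace at a double eigenvalue modulo $p$, and on such a space the form $x\mapsto x^{T}Nx/p$ need not vanish identically, so the conclusion would genuinely fail; the multiplicity-one property must be extracted from the existence of $\bar{Q}$. My plan is to apply the self-adjoint projector $\pi$ onto the generalized $\lambda_0$-eigenspace (a polynomial in $A$, obtained by Hensel-lifting the coprime factorization $\phi\equiv(x-\lambda_0)^{2}\varphi$) to the columns of $\bar{Q}$. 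Since $\pi$ commutes with $A$ and $\bar{Q}^{T}h(A)\bar{Q}=\ell^{2}h(B)\equiv 0\pmod{p^2}$ for every polynomial $h$, the projected columns both intertwine $A$ with $B$ and are mutually isotropic modulo $p^2$ inside the rank-two generalized eigenspace, whose ambient form is unimodular. Forcing a geometrically split (two-eigenvector) situation to coexist with this mod-$p^2$ isotropy and with the minimality of $\ell$ should produce the required contradiction, leaving geometric multiplicity one and thereby completing the reduction of the previous paragraph.
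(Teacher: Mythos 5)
Your reduction is correct and is in substance the same as the paper's: the paper writes $q=k_1\bar{q}_1+\cdots+k_s\bar{q}_s+p\hat{\beta}$ and expands $q^TAq-\lambda_0q^Tq$ using $\bar{Q}^T\bar{Q}=\ell^2I$ and $\bar{Q}^TA\bar{Q}=\ell^2B$ with $p^2\mid\ell^2$, which is exactly your two observations (that $q^TNq\bmod p^2$ only depends on $q\bmod p$, and that it vanishes on the column space of $\bar{Q}$). Your structural refinement $\phi=\psi^2\omega$ via the isotropic flag $0\subset\mathcal{C}\subset\mathcal{C}^{\perp}\subset\mathbb{F}_p^n$ is correct and is actually sharper than what the paper extracts from Lemma~\ref{XS} (which only gives $\gcd(\psi,\chi)\neq1$, hence $(x-\lambda_0)\mid\psi$); it pins down $\psi=x-\lambda_0$ and $s=1$.

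The genuine gap is the step you yourself flag: you never prove that the \emph{given} eigenvector $q$ lies in $\mathcal{C}$; you only prove that $\mathcal{C}$ contains \emph{some} $\lambda_0$-eigenvector. Your closing paragraph (projectors onto the generalized eigenspace, Hensel lifting, ``should produce the required contradiction'') is a plan, not an argument, so the proof is incomplete as written. Note that the paper's own proof elides the same point (it simply asserts that $q$ is a linear combination of $q_1,\dots,q_s$), but in the paper the lemma is only ever invoked inside Lemma~\ref{corank1}, i.e.\ under the additional hypothesis $\rank_p(\lambda_0I-A)=n-1$; there the $\lambda_0$-eigenspace is one-dimensional, so the eigenvector of $A|_{\mathcal{C}}$ afforded by $(x-\lambda_0)\mid\psi$ already spans it and $q\in\mathcal{C}$ is immediate. (The corank $\ge2$ case is handled separately by Lemma~\ref{corank2} and never touches Lemma~\ref{LLX}.) So the efficient repair is not to prove geometric multiplicity one from the existence of $\bar{Q}$ --- which may be hard and is not needed --- but to import the corank-one hypothesis into your final step and conclude directly.
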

\begin{proof}
By the proof of Lemma~\ref{XS}, we have $x-\lambda_0|\psi(x)$, where $\psi(x)$ is the characteristic polynomial of $\tilde{B}$. Let $q_1,q_2,\cdots,q_s$ be a system of maximal independent vectors of columns of $\ell Q$ (mod $p$). Then $V:=span\{q_1,q_2,\cdots,q_s\}$ is an invariant subspace of $A$. Note that the restriction of $A$ on $V$, $A|_V$, has an eigenvalue $\lambda_0$ with a corresponding eigenvector $q$. It follows that $q$ is the linear combinations of $q_1,q_2,\cdots,q_s$, i.e., there exist constants $c_1,c_2,\cdots,c_s$ such that $q=c_1q_1+c_2q_2+\cdots+c_kq_k$ over $\mathbb{F}_p$, or equivalently, $q=c_1q_1+c_2q_2+\cdots+c_sq_s+p\beta$ over $\mathbb{Z}$ for some integral vector $\beta$. Let $\bar{q}_1,\bar{q}_2,\cdots,\bar{q}_s$ be the columns of $\ell Q$ corresponding to $q_1,q_2,\cdots,q_s$. Then
\begin{equation}\label{OPQ}
q=k_1\bar{q}_1+k_2\bar{q}_2+\cdots+k_s\bar{q}_k+p\hat{\beta},
\end{equation}
over $\mathbb{Z}$ for some integral vector $\hat{\beta}$ and integers $k_1,k_2,\cdots,k_s$.

Next, we show $q^T(Aq-\lambda_0q)\equiv 0$ (mod $p^2$) holds. It follows from Eq.~(\ref{OPQ}) that
\begin{equation}\label{POP}
(q-p\hat{\beta})^T(q-p\hat{\beta})=\sum_{i,j}k_ik_j\bar{q}_i^T\bar{q}_j\equiv 0~({\rm mod}~p^2),
\end{equation}
where the congruence equation follows from $\bar{Q}^T\bar{Q}=\ell^2I$, which implies that $\bar{q}_i^T\bar{q}_j=\ell^2$ if $i=j$, and $0$ otherwise. By Eq.~(\ref{POP}) we get
\begin{equation}\label{XxX}
q^Tq-2p\hat{\beta}^Tq \equiv 0~({\rm mod}~p^2),
\end{equation}

Therefore, we have
\begin{align*}
q^TAq-\lambda_0q^Tq&=(\sum_{i,j}k_ik_j\bar{q}_i^TA\bar{q}_j+2p\bar{\beta}^TA\sum_{i}k_i\bar{q}_i+p^2\hat{\beta}^T\hat{\beta})-\lambda_0q^Tq\\
&\equiv 2p\hat{\beta}^TA\sum_{i}k_i\bar{q}_i-2p\lambda_0\hat{\beta}^Tq ~({\rm mod}~p^2),
\end{align*}
where we have used Eq.~(\ref{XxX}) and the equations $\bar{q}_i^TA\bar{q}_j=\ell^2b_{ij}\equiv 0$ (mod $p^2$) with $b_{ij}$ being the $(i,j)$-th entry of $B$, which follows from the fact that $\bar{Q}^TA\bar{Q}=\ell^2 B$.

Moreover, it follows from Eq.~(\ref{OPQ}) that $A\sum_{i}k_i\bar{q}_i=Aq-pA\hat{\beta}$. Thus, we have
\begin{align*}
2p\hat{\beta}^TA\sum_{i}k_i\bar{q}_i-2p\hat{\beta}^Tq &=2p\hat{\beta}^T(Aq-pA\hat{\beta})-2p\lambda_0\hat{\beta}^Tq\\
&\equiv 2p\hat{\beta}^T(Aq-\lambda_0q)\\
&\equiv 0~({\rm mod}~p^2).
\end{align*}
The last congruence follows, since $Aq-\lambda_0q\equiv 0$ (mod $p$). This completes the proof.
\end{proof}

\section{Proof of Theorem~\ref{Main1}}

In this section, we present the proof of Theorem~\ref{Main1}. Let $Q\in{O_n(\mathbb{Q})}$ with level $\ell$. Our strategy is to show that if $\Delta_A$ is odd and square-free, then $Q^TAQ=B\in {S_n(\mathbb{Z})}$ implies $\ell(Q)=1$. Before doing so, we need several lemmas below.

\begin{lemma}\label{corank1}
Suppose $\rank_p(\lambda_0I-A)=n-1$. Then under the conditions of Lemma~\ref{LLX}, we have $p^2|\det(\lambda_0I-A)$.
\end{lemma}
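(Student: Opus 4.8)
The plan is to translate the divisibility claim into a statement about the Smith Normal Form of the integral symmetric matrix $M := \lambda_0 I - A$, where $\lambda_0$ is taken as an integer representative of its residue class. Since $\det(M) = \phi(\lambda_0)$, if $\det(M)=0$ the claim holds trivially, so I would assume $M$ has full rank over $\mathbb{Q}$ and let $d_1 \mid \cdots \mid d_n$ be its elementary divisors. The hypothesis $\rank_p(M) = n-1$ forces exactly one elementary divisor, namely $d_n$, to be divisible by $p$ (the divisibility chain $d_i \mid d_{i+1}$ rules out any earlier one); consequently the $p$-adic valuations of $\det(M)$ and of $d_n$ coincide, and the desired conclusion $p^2 \mid \det(\lambda_0 I - A)$ is equivalent to $p^2 \mid d_n$. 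By Lemma~\ref{Square}, this in turn is equivalent to exhibiting an integer vector $x \not\equiv 0 \pmod p$ solving $Mx \equiv 0 \pmod{p^2}$.

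To produce such an $x$, I would start from the eigenvector $q$, which satisfies $Mq \equiv 0 \pmod p$ and $q \not\equiv 0 \pmod p$; write $Mq = p r$ for an integral vector $r$. I then look for a correction of the form $x = q + p y$. A direct computation gives $Mx = p(r + My)$, so $Mx \equiv 0 \pmod{p^2}$ reduces to the linear system $My \equiv -r \pmod{p}$ over $\mathbb{F}_p$, and any such $x$ is automatically nonzero mod $p$. Thus everything comes down to showing that $r$ lies in the column space of $M$ over $\mathbb{F}_p$.

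This is where Lemma~\ref{LLX} and the symmetry of $M$ enter, and I expect this to be the crux of the argument. Because $A$ is symmetric, so is $M$, and over $\mathbb{F}_p$ the column space of a symmetric matrix is exactly the orthogonal complement of its kernel, $\mathrm{Im}(M) = (\ker_{\mathbb{F}_p} M)^{\perp}$. The rank hypothesis together with the factorization $\phi = (x-\lambda_0)^2\varphi$ with $\varphi$ square-free guarantees that $\ker_{\mathbb{F}_p} M$ is one-dimensional, spanned by $q$, so membership $r \in \mathrm{Im}(M)$ is equivalent to the single scalar condition $q^T r \equiv 0 \pmod p$. Finally, I would recover exactly this scalar condition from Lemma~\ref{LLX}: since $Mq = \lambda_0 q - Aq = -(Aq - \lambda_0 q)$, we have $q^T M q = -\,q^T(Aq - \lambda_0 q) \equiv 0 \pmod{p^2}$, while $q^T M q = q^T(p r) = p\,(q^T r)$; dividing by $p$ yields $q^T r \equiv 0 \pmod p$. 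Hence $My \equiv -r \pmod p$ is solvable, the required $x$ exists, and $p^2 \mid d_n \mid \det(\lambda_0 I - A)$ follows. The main obstacle is precisely this passage from the scalar hypothesis supplied by Lemma~\ref{LLX} to the vector solvability condition $My\equiv -r$; the symmetry of $M$, via $\mathrm{Im}(M)=(\ker M)^{\perp}$, is the device that bridges the two, and without it the argument would not close.
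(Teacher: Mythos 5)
Your proof is correct and follows essentially the same route as the paper: both use Lemma~\ref{LLX} to obtain $q^T\alpha\equiv 0\pmod p$ for $\alpha=(Aq-\lambda_0q)/p$, combine the symmetry of $A$ with $\rank_p(\lambda_0I-A)=n-1$ to place $\alpha$ in the mod-$p$ column space of $\lambda_0I-A$, and then conclude via Lemma~\ref{Square}. The only cosmetic difference is that the paper deduces this column-space membership by observing that $q^T[A-\lambda_0I,\alpha]=0$ forces the augmented matrix to retain rank $n-1$, whereas you phrase the same fact as $\mathrm{Im}(M)=(\ker M)^{\perp}$ for a symmetric matrix over $\mathbb{F}_p$.
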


\begin{proof}
Let $\alpha:=(Aq-\lambda_{0} q)/p$ be an integral vector. Then it follows from Lemma~\ref{LLX} that $q^T\alpha\equiv 0$ (mod $p$). Thus, we have $q^T[A-\lambda_0I,\alpha]=0$ over $\mathbb{F}_p$. Note that $q\neq 0$. It follows that $\rank_p([A-\lambda_0I,\alpha])\leq n-1$. By the assumption that $\rank_p(\lambda_0I-A)=n-1$, we get $\rank_p([A-\lambda_0I,\alpha])= n-1$. Thus, $\alpha$ can be expressed as the linear combinations of the column vectors of $A-\lambda_0I$, i.e., there exists an integral vector $x$ such that $(A-\lambda_0I)x\equiv (Aq-\lambda_{0} q)/p$ (mod $p$). Then by Lemma~\ref{Square}, the lemma follows immediately.
\end{proof}

\begin{lemma}\label{corank2}
Let $B\in M_n(\mathbb{Z})$ and $p$ be a prime number. If $\rank_p(B)=n-k$, then $p^k|\det(B)$.
\end{lemma}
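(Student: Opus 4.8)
The plan is to pass to the Smith Normal Form of $B$ recalled just above, since reducing a Smith decomposition modulo $p$ translates the hypothesis on $\rank_p(B)$ into a count of how many elementary divisors are divisible by $p$, and this same count simultaneously controls the power of $p$ dividing $\det(B)$. Before invoking that machinery I would dispose of the degenerate case: if $\det(B)=0$, then $p^k\mid\det(B)$ holds trivially, so I may assume $\det(B)\neq0$, which is precisely the full-rank hypothesis under which the Smith Normal Form theorem quoted earlier applies.

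Next I would write $B=USV$ with $U,V$ unimodular and $S=\mathrm{diag}(d_1,\dots,d_n)$. Since $\det(U)=\pm1=\det(V)$, the reductions $\bar U,\bar V$ are invertible over $\mathbb{F}_p$, so $\rank_p(B)=\rank_p(S)$; and the rank of a diagonal matrix over $\mathbb{F}_p$ is just the number of indices $i$ with $p\nmid d_i$. Hence $\rank_p(B)=n-k$ says that exactly $n-k$ of the $d_i$ are coprime to $p$, and therefore exactly $k$ of them are divisible by $p$. Since $\det(B)=\det(U)\det(S)\det(V)=\pm\prod_{i=1}^n d_i$ and each of those $k$ divisible entries contributes at least one factor of $p$, the product carries at least $k$ factors of $p$, yielding $p^k\mid\det(B)$ as desired. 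Note that the divisibility chain $d_i\mid d_{i+1}$ is not even needed for this counting argument; only the invariance of rank and of the determinant, up to sign, under unimodular multiplication is used.

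I expect no genuinely hard step here. The only point requiring a little care is the full-rank hypothesis built into the Smith Normal Form theorem, which is exactly why the case $\det(B)=0$ must be split off at the outset. As an alternative one could localize at $p$ and diagonalize $B$ over $\mathbb{Z}_{(p)}$ as $\mathrm{diag}(p^{a_1},\dots,p^{a_n})$ up to units, so that $\rank_p(B)=\#\{i:a_i=0\}=n-k$ forces the $p$-adic valuation $v_p(\det B)=\sum_i a_i\ge k$; this route avoids the separate degenerate case but relies on localization, so I would present the Smith-form version, whose ingredients are already set up in the paper.
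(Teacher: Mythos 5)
Your proof is correct, but it takes a genuinely different route from the paper's. The paper never invokes the Smith Normal Form for this lemma: it performs Gaussian elimination over $\mathbb{F}_p$ directly, producing an integer matrix $U$ with $p\nmid\det(U)$ such that the last $k$ rows of $UB$ are divisible by $p$, and then reads off $p^k\mid\det(UB)=\det(U)\det(B)$ by extracting a factor of $p$ from each of those $k$ rows. That argument is shorter, applies uniformly whether or not $\det(B)=0$, and needs only left multiplication by a matrix invertible modulo $p$ rather than a two-sided unimodular factorization. Your version buys a cleaner conceptual picture --- the $p$-rank and the $p$-adic valuation of the determinant are both read off from the same list of elementary divisors $d_1,\dots,d_n$ --- at the cost of having to split off the singular case separately, since the SNF theorem as quoted in the paper is stated only for full-rank matrices; you handle that case correctly, and your remark that the divisibility chain $d_i\mid d_{i+1}$ plays no role in the counting is also accurate.
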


\begin{proof}
Since $\rank_p(B)=n-k$, there exists an $n$ by $n$ matrix $U$ and an $n-k$ by $n$ matrix $V$ such that
\[UB\equiv\begin{bmatrix}V\\0\end{bmatrix}\pmod{p},\quad\det(U)\not\equiv0\pmod{p},\]
or
\[UB=\begin{bmatrix}V\\pW\end{bmatrix},\quad p\nmid\det(U),\]
for some $k$ by $n$ matrix $W$. Therefore, $p^k|\det(U)\det(B)$. Since $p\nmid\det(U)$, we get that $p^k|\det(B)$.
\end{proof}

\begin{lemma}\label{decomp}
Let $p$ be any prime factor of $\Delta_A$. If $p^2\nmid\Delta_A$, we have $\phi(x)=(x-\lambda_0)^2\varphi(x)$ over $\mathbb{F}_p$ for some $\lambda_0$ and square-free $\varphi(x)$, where $p\nmid\varphi(\lambda_0)$.
\end{lemma}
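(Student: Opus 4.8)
The plan is to convert the arithmetic hypothesis $v_p(\Delta_A)=1$ (which is exactly what ``$p\mid\Delta_A$ and $p^2\nmid\Delta_A$'' means) into a statement about the factorization of $\bar\phi$ over $\mathbb{F}_p$ by controlling $\deg\gcd(\bar\phi,\bar\phi')$. The bridge is the Sylvester matrix $M$ of $\phi$ and $\phi'$ from Section~\ref{secDiscr}, whose determinant is $\pm\res(\phi,\phi')=\pm\Delta_A$.

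First I would recall the standard fact that, over the field $\mathbb{F}_p$, the corank of the reduced Sylvester matrix $\bar M$ equals $\deg\gcd(\bar\phi,\bar\phi')$; equivalently $\rank_p(\bar M)=(2n-1)-\deg\gcd(\bar\phi,\bar\phi')$. Feeding this into Lemma~\ref{corank2} with $B=M$ gives $p^{\deg\gcd(\bar\phi,\bar\phi')}\mid\det(M)=\pm\Delta_A$, that is,
\[ v_p(\Delta_A)\ \ge\ \deg\gcd(\bar\phi,\bar\phi'). \]
Since $p\mid\Delta_A$, Theorem~\ref{divisor} tells us $\bar\phi$ has a repeated factor, so $\gcd(\bar\phi,\bar\phi')\neq1$ and $\deg\gcd(\bar\phi,\bar\phi')\ge1$. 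Combined with $v_p(\Delta_A)=1$, this pins the degree down exactly: $\deg\gcd(\bar\phi,\bar\phi')=1$.

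It then remains to read off the factorization shape from $\deg\gcd(\bar\phi,\bar\phi')=1$. Writing $\bar\phi=\prod_i g_i^{e_i}$ with the $g_i$ distinct monic irreducibles, a short computation shows that every repeated factor divides the gcd: if $e_i\ge2$ then $g_i^{e_i-1}\mid\bar\phi'$, hence $g_i\mid\gcd(\bar\phi,\bar\phi')$. Because $\deg\gcd=1$, there can be at most one repeated $g_i$ and it must be linear, say $g_{i_0}=x-\lambda_0$, while all other factors are simple. Writing $\bar\phi=(x-\lambda_0)^{e}\varphi$ with $e=e_{i_0}\ge2$ and $\varphi$ square-free satisfying $\varphi(\lambda_0)\neq0$, the derivative identity $\bar\phi'=(x-\lambda_0)^{e-1}\bigl(e\varphi+(x-\lambda_0)\varphi'\bigr)$ shows $(x-\lambda_0)^{e-1}\mid\gcd(\bar\phi,\bar\phi')$, whence $e-1\le\deg\gcd=1$ and therefore $e=2$. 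This is exactly the asserted form $\bar\phi=(x-\lambda_0)^2\varphi$ with $\varphi$ square-free and $p\nmid\varphi(\lambda_0)$.

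The main obstacle—and the only place needing care—is the corank identity for the Sylvester matrix, since $\phi'$ has leading coefficient $n$, which may vanish modulo $p$ when $p\mid n$; in that degenerate case the reduced matrix $\bar M$ loses its expected top degree. I only need the inequality $\operatorname{corank}_p(\bar M)\ge\deg\gcd(\bar\phi,\bar\phi')$, which I would justify directly by exhibiting $\deg\gcd$ independent kernel vectors of the form $(\phi_2 w,-\phi_1 w)$, where $\bar\phi=D\phi_1$, $\bar\phi'=D\phi_2$, $D=\gcd(\bar\phi,\bar\phi')$ and $\deg w<\deg D$; this lower bound on the corank is insensitive to the degree drop and suffices for $v_p(\Delta_A)\ge\deg\gcd$. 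As a consistency check, for $p=2$ the same derivative computation forces $(x-\lambda_0)^2\mid\gcd(\bar\phi,\bar\phi')$, so $\deg\gcd\ge2$ and the hypothesis $p^2\nmid\Delta_A$ is never met; the lemma then holds vacuously, in harmony with the fact that $\Delta_A$ is assumed odd in the intended application.
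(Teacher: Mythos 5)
Your proof is correct, and it rides on the same bridge as the paper's: the Sylvester matrix $M$ with $\det(M)=\pm\Delta_A$, kernel vectors of $M^T$ over $\mathbb{F}_p$ coming from polynomial identities $u\bar\phi=v\bar\phi'$, and Lemma~\ref{corank2} to convert corank into a power of $p$ dividing $\Delta_A$. The difference is organizational rather than conceptual. The paper argues by contradiction: it assumes a bad factorization shape ($\phi=g^2\varphi$ with $\deg g\ge 2$, or $p\mid\varphi(\lambda_0)$) and in each case writes down two explicit independent kernel vectors $\eta_1,\eta_2$ (your vectors with $w=1$ and $w=x$), contradicting $\dim\ker_p(M)=1$. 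You instead prove once and for all the inequality $v_p(\Delta_A)\ge\deg\gcd(\bar\phi,\bar\phi')$ via the family $(\phi_2w,-\phi_1w)$, conclude $\deg\gcd(\bar\phi,\bar\phi')=1$, and then extract the factorization by elementary means. Your packaging buys three things the paper leaves implicit or unaddressed: it makes the square-freeness of $\varphi$ an automatic consequence (the paper's ``we can assume $\phi=g^2\varphi$'' silently requires $g$ to absorb all repeated factors), it explicitly handles the degree drop of $\bar\phi'$ when $p\mid n$ (harmless in both proofs, since only a corank lower bound is needed, but you are the one who says so), and it records that the lemma is vacuous at $p=2$. One small remark: the paper justifies the existence of a repeated factor by citing Lemma~\ref{XS}, which is about primes dividing the level $\ell$ and is not hypothesized here; your appeal to Theorem~\ref{divisor} is the correct reference at that step.
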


\begin{proof}
Using the notations in Section \ref{secDiscr}, that is, let
\begin{align*}
\phi(x)&=a_0x^n+a_1x^{n-1}+\cdots+a_{n-1}x+a_n,\\
u(x)&=u_0x^{n-2}+u_1x^{n-3}+\cdots+u_{n-3}x+u_{n-2},\\
v(x)&=v_0x^{n-1}+v_1x^{n-2}+\cdots+v_{n-2}x+v_{n-1},\\
\end{align*}
\[M=\begin{bmatrix}
a_0&a_1&\cdots&\cdots&\cdots&a_n\\
&a_0&a_1&\cdots&\cdots&\cdots&a_n\\
&&\cdots&\cdots&\cdots&\cdots\\
&&&a_0&a_1&\cdots&\cdots&\cdots&a_n\\
na_0&(n-1)a_1&\cdots&\cdots&a_{n-1}\\
&na_0&(n-1)a_1&\cdots&\cdots&a_{n-1}\\
&&\cdots&\cdots&\cdots&\cdots\\
&&&&na_0&(n-1)a_1&\cdots&\cdots&a_{n-1}
\end{bmatrix}\]
and
\begin{equation}\label{KK}
\eta=(u_0,u_1,\cdots,u_{n-2},-v_0,-v_1,\cdots,-v_{n-1})^T,
\end{equation}
then $\Delta_A=\pm\det(M)$. Furthermore, $M^T\eta=0$ is equivalent to $u\phi=v\phi'$ over $\mathbb{F}_p$. By Lemma~\ref{corank2}, we have $\dim\ker_p(M)=(2n-1)-\rank_p(M)=1$. By Lemma~\ref{XS}, we can assume that $\phi(x)=g(x)^2\varphi(x)$, where $\deg(g)\ge1$. If $\deg(g)\ge2$, let
\begin{align*}
u_1(x)&=2g'(x)\varphi(x)+g(x)\varphi'(x),&u_2(x)&=xu_1(x),\\
v_1(x)&=g(x)\varphi(x),&v_2(x)&=xv_1(x).
\end{align*}
Then $u_1,v_1$ and $u_2,v_2$ are both solutions to $u\phi=v\phi'$, over $\mathbb{F}_p$. It is easy to verify that $\deg(u_i)<\deg(\phi')$ and $\deg(v_i)<\deg(\phi)$, for $i=1,2$. Let $\eta_1$ (resp. $\eta_2$) be the vector obtained from Eq.~(\ref{KK}) by
replacing the coefficients of $u$ and $v$ with that of $u_1$ and $v_1$ (resp. $u_2$ and $v_2$). It is easy to see that $\eta_1$ and $\eta_2$  are linearly independent. Thus, $\dim\ker_p(M)$ is at least 2, contradicting that $\dim\ker_p(M)=1$.

Now, let $g(x)=x-\lambda_0$. If $p|\varphi(\lambda_0)$, we can write $\varphi(x)=(x-\lambda_0)\hat{\varphi}(x)$. Let
\begin{align*}
u_1(x)&=3\hat{\varphi}(x)+(x-\lambda_0)\hat{\varphi}'(x),&u_2(x)&=xu_1(x),\\
v_1(x)&=(x-\lambda_0)\hat{\varphi}(x),&v_2(x)&=xv_1(x).
\end{align*}
Then using the similar arguments, $u_1,v_1$ and $u_2,v_2$ are both solutions to $u\phi=v\phi'$, and we will get a contradiction as before. Therefore, $\phi(x)=(x-\lambda_0)^2\varphi(x)$ over $\mathbb{F}_p$ for some $\lambda_0$ and square-free $\varphi(x)$, where $p\nmid\varphi(\lambda_0)$.
\end{proof}

\begin{lemma}\label{cogEqSolEx}
Let $p$ be an odd prime. Consider the equation
\begin{equation}\label{polyRes}
u(x)\phi(x)\equiv v(x)\phi'(x)\pmod{p^2}.
\end{equation}
Under the conditions of Lemma~\ref{decomp}, Eq. \eqref{polyRes} has a solution such that $u,v\not\equiv0\pmod{p}$ and that $\deg(v)<\deg(\phi)$ and $\deg(u)<\deg(\phi')$ if and only if $p^2|\det(\lambda_0I-A)$.
\end{lemma}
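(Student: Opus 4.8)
The plan is to reduce the biconditional to a single Hensel-type lifting obstruction and then to compute that obstruction explicitly. Over $\mathbb{F}_p$ the homogeneous equation $u\phi\equiv v\phi'$ with the stated degree bounds describes exactly $\ker_p(M)$, the kernel of the Sylvester matrix $M$ of Lemma~\ref{decomp}, which is one-dimensional there. Since $\gcd(\phi,\phi')=x-\lambda_0$ over $\mathbb{F}_p$, a direct check shows this kernel is spanned by $(u_0,v_0)=\bigl(\phi'/(x-\lambda_0),\,\phi/(x-\lambda_0)\bigr)$; writing $\phi=(x-\lambda_0)^2\varphi$ gives $u_0=2\varphi+(x-\lambda_0)\varphi'$ and $v_0=(x-\lambda_0)\varphi$, both nonzero mod $p$, so the requirement $u,v\not\equiv0$ is the same as requiring the reduction mod $p$ to be nonzero. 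Because any admissible solution of \eqref{polyRes} reduces mod $p$ to a nonzero scalar multiple of $(u_0,v_0)$, rescaling by a unit lets me assume it reduces to exactly $(u_0,v_0)$. Hence \eqref{polyRes} has an admissible solution if and only if $(u_0,v_0)$ lifts to a solution modulo $p^2$.

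First I would set up the lift. Fix integer lifts $u_0,v_0$, choosing $v_0=(x-\lambda_0)\varphi$ so that $v_0(\lambda_0)=0$ on the nose, and put $p\rho:=u_0\phi-v_0\phi'\in\mathbb{Z}[x]$, which is divisible by $p$ since $(u_0,v_0)$ solves the equation mod $p$. Seeking $u=u_0+pu_1$, $v=v_0+pv_1$, the congruence $u\phi\equiv v\phi'\pmod{p^2}$ becomes the inhomogeneous equation $u_1\phi-v_1\phi'\equiv-\rho\pmod p$, so the lift exists if and only if $-\rho\bmod p$ lies in the image of the Sylvester map $L:(u_1,v_1)\mapsto u_1\phi-v_1\phi'$ over $\mathbb{F}_p$. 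The key structural step, which I expect to be the main obstacle, is to identify this image. Because $\phi$ has a double and $\phi'$ a simple root at $\lambda_0$ over $\mathbb{F}_p$, every element $u_1\phi-v_1\phi'$ vanishes at $\lambda_0$, so $\operatorname{Im}L\subseteq V_{\lambda_0}:=\{g:\deg g\le 2n-2,\ g(\lambda_0)=0\}$, a hyperplane of dimension $2n-2$; since $\dim_{\mathbb{F}_p}\ker L=1$ the image also has dimension $2n-2$, whence $\operatorname{Im}L=V_{\lambda_0}$. Consequently the lift exists precisely when $\rho(\lambda_0)\equiv0\pmod p$, i.e. the cokernel functional is just evaluation at $\lambda_0$.

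It then remains to evaluate the obstruction. From $p\rho=u_0\phi-v_0\phi'$ I get $\rho(\lambda_0)=\bigl(u_0(\lambda_0)\phi(\lambda_0)-v_0(\lambda_0)\phi'(\lambda_0)\bigr)/p$, and the second term drops out because $v_0(\lambda_0)=0$, leaving $\rho(\lambda_0)=u_0(\lambda_0)\cdot\bigl(\phi(\lambda_0)/p\bigr)$. Here $u_0(\lambda_0)=2\varphi(\lambda_0)$ is a unit mod $p$; this is the one place the hypothesis that $p$ is odd is essential, together with $p\nmid\varphi(\lambda_0)$ from Lemma~\ref{decomp}. Therefore $\rho(\lambda_0)\equiv0\pmod p$ if and only if $p^2\mid\phi(\lambda_0)=\det(\lambda_0I-A)$, which is exactly the claimed equivalence. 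I would also record at the outset that $\det(\lambda_0I-A)\bmod p^2$ does not depend on the integer lift of $\lambda_0$, since replacing $\lambda_0$ by $\lambda_0+p$ changes $\phi(\lambda_0)$ by $p\phi'(\lambda_0)+(\text{terms in }p^2)\equiv0\pmod{p^2}$, using $\phi'(\lambda_0)\equiv0\pmod p$. Once $\operatorname{Im}L$ is pinned down as $V_{\lambda_0}$, the vanishing of the $v_0$-term makes the final computation immediate.
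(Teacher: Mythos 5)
Your proof is correct, and it takes a genuinely different route through the heart of the lemma. Both you and the paper begin the same way: any admissible solution of \eqref{polyRes}, reduced mod $p$, must be a unit multiple of $(u_0,v_0)=\bigl(\phi'/(x-\lambda_0),\,\phi/(x-\lambda_0)\bigr)$, which is the content of the paper's Eq.~\eqref{solPat}. From there the paper argues each direction by explicit polynomial arithmetic: for necessity it substitutes, compares the order-$p$ terms to derive the system \eqref{fgEq}, and extracts $(x-\lambda_0)\mid r$; for sufficiency it exhibits the lift by hand, with the scalar $\mu=-[2\varphi(\lambda_0)]^{-1}r'(\lambda_0)$. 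You instead treat both directions at once as a single lifting problem: the lift of $(u_0,v_0)$ modulo $p^2$ exists iff the defect $\rho=(u_0\phi-v_0\phi')/p$ lies in $\operatorname{Im}L$, and you identify $\operatorname{Im}L$ as the hyperplane $\{g:g(\lambda_0)=0\}$ by a dimension count, using $\dim_{\mathbb{F}_p}\ker M=1$ (which is available under the hypotheses of Lemma~\ref{decomp} via Lemma~\ref{corank2}) together with the containment forced by $\phi(\lambda_0)\equiv\phi'(\lambda_0)\equiv0\pmod p$. The obstruction then collapses to $\rho(\lambda_0)=2\varphi(\lambda_0)\,\phi(\lambda_0)/p$, a unit times $\det(\lambda_0I-A)/p$, giving the equivalence in one line. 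Your version is shorter, makes transparent exactly where the hypotheses ``$p$ odd'' and ``$p\nmid\varphi(\lambda_0)$'' enter (invertibility of $2\varphi(\lambda_0)$), and dispenses with verifying the identities \eqref{fgEq}; the paper's version is more elementary and constructive, producing the lifted pair $(u,v)$ explicitly. Your observation that $\det(\lambda_0I-A)\bmod p^2$ is independent of the integer representative of $\lambda_0$ is a worthwhile point the paper leaves implicit. I see no gap.
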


\begin{proof}
 By Lemma~\ref{decomp}, we can write
\[\phi(x)=(x-\lambda_0)^2\varphi(x)+pr(x),~{\rm over}~\mathbb{Z}.\]
where $\deg(r)\le \deg(\phi)-1$. Then $\phi'=(x-\lambda_0)(2\varphi+(x-\lambda_0)\varphi')+pr'$. (Notice: Here and below, for simplicity, we write $f$ and $f(x)$ interchangeably if no confusion arises).

First, we prove the necessary part of the lemma. Assume that such $u$ and $v$ exist. Changing the modulus of Eq.~\eqref{polyRes} to $p$, we have
\[u(x)(x-\lambda_0)^2\varphi(x)\equiv v(x)(x-\lambda_0)(2\varphi(x)+(x-\lambda_0)\varphi'(x))\pmod{p},\]
which means $(x-\lambda_0)\varphi(x)|v(x)(2\varphi(x)+(x-\lambda_0)\varphi'(x))$ over $\mathbb{F}_p$. Since $\varphi(x)$ is square-free (that is, $(\varphi(x),\varphi'(x))=1$) and $(x-\lambda_0)\nmid\varphi(x)$, we have $(x-\lambda_0)\varphi(x)|v(x)$. By the degree restriction, we have $\deg(v)\leq \deg((x-\lambda_0)\varphi(x))$. So we can write
\begin{equation}\label{solPat}
\begin{split}
v(x)&=k(x-\lambda_0)\varphi(x)+pg(x),\\
u(x)&=k(2\varphi(x)+(x-\lambda_0)\varphi'(x))+pf(x)
\end{split}
\end{equation}
for some scalar $k$ and polynomials $f$ and $g$, where $p\nmid k$, $\deg(f)<\deg(\phi)-1$ and $\deg(g)<\deg(\phi)$. Without loss of generality, we can assume that $k=1$. Substitute $u$ and $v$ into Eq.~(\ref{polyRes}), we get
\begin{align*}
u\phi&\equiv (x-\lambda_0)^2(2\varphi+(x-\lambda_0)\varphi')\varphi\\
&\quad+p[(2\varphi+(x-\lambda_0)\varphi')r+(x-\lambda_0)^2\varphi f]~({\rm mod}~p^2),\\
v\phi'&\equiv (x-\lambda_0)^2(2\varphi+(x-\lambda_0)\varphi')\varphi\\
&\quad+p[(x-\lambda_0)r'\varphi+(x-\lambda_0)(2\varphi+(x-\lambda_0)\varphi')g]~({\rm mod}~p^2).
\end{align*}
Therefore, we have
$$(2\varphi+ (x-\lambda_0)\varphi')r+ (x-\lambda_0)^2\varphi f= (x-\lambda_0)r'\varphi+ (x-\lambda_0)(2\varphi+ (x-\lambda_0)\varphi')g$$
over $\mathbb{F}_p$. Rewrite it as
\[[2r+ (x-\lambda_0)^2f- (x-\lambda_0)r-2 (x-\lambda_0)g]\varphi=[ (x-\lambda_0)^2g- (x-\lambda_0)r]\varphi'.\]
Therefore, there exists a polynomial $\Phi$ such that
\begin{equation}\label{fgEq}
\begin{split}
2r+ (x-\lambda_0)^2f- (x-\lambda_0)r'-2 (x-\lambda_0)g&=\Phi\varphi',\\
(x-\lambda_0)( (x-\lambda_0)g-r)&=\Phi\varphi.
\end{split}
\end{equation}
The second equation implies that $ (x-\lambda_0)|\Phi$. Then it follows from the first equation that $ (x-\lambda_0)|r(x)$ over $\mathbb{F}_p$. Thus, we get that $\det(\lambda_0I-A)=\phi(\lambda_0)=pr(\lambda_0)$ is divisible by $p^2$.

Now we prove the reverse. Since $r(\lambda_0)=\phi(\lambda_0)/p=\det(\lambda_0I-A)/p$ is divisible by $p$, we can write
\begin{equation}\label{remainder}
r(x)=(x-\lambda_0)\hat{r}(x)+pr_0(x).
\end{equation}
 Let
\[f=(r'+2\mu\varphi)/(x-\lambda_0)+\mu\varphi',g=\hat{r}+\mu\varphi,\]
where $\mu=-[2\varphi(\lambda_0)]^{-1}r'(\lambda_0)$ over $\mathbb{F}_p$. Then define $u$ and $v$ by Eqs.~\eqref{solPat}. Since
\begin{align*}
\deg(f)&\le\max\{\deg(r')-1,\deg(\varphi)-1,\deg(\varphi')\}=\deg(\phi)-3,\\
\deg(g)&\le\max\{\deg(r)-1,\deg(\varphi)\}=\deg(\phi)-2,
\end{align*}
it is easy to verify that $\deg(v)<\deg(\phi)$ and $\deg(u)<\deg(\phi')$. Thus, the degree restrictions are satisfied.

Moreover, by the discussions above, we only need to verify Eqs.~\eqref{fgEq} hold. Actually, we have
\begin{align*}
&\quad 2r+(x-\lambda_0)^2f-(x-\lambda_0)r'-2(x-\lambda_0)g\\
&=2(x-\lambda_0)\hat{r}+(x-\lambda_0)(r'+2\mu\varphi)+\mu(x-\lambda_0)^2\varphi'-(x-\lambda_0)r'-2(x-\lambda_0)(\hat{r}+\mu\varphi)\\
&=(\mu(x-\lambda_0)^2)\varphi',
\end{align*}
and
\[(x-\lambda_0)((x-\lambda_0)g-r)=(x-\lambda_0)^2(g-\hat{r})=(\mu(x-\lambda_0)^2)\varphi\]
over $\mathbb{F}_p$, where we have used Eq.~\eqref{remainder}. Therefore, the $u$ and $v$ above satisfy all the conditions in the lemma. The proof is complete.
\end{proof}

\begin{lemma}\label{pnmidl}
Let $Q$ be a rational orthogonal matrix with level $\ell$ such that $Q^TAQ\in{S_n(\mathbb{Z})}$. Let $p$ be an odd prime factor of $\Delta_A$. If $p^2\nmid\Delta_A$, then $p\nmid\ell$.
\end{lemma}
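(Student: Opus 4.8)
The plan is to argue by contradiction: assume $p\mid\ell$ and derive $p^2\mid\Delta_A$, contradicting the hypothesis. Since $p$ is an odd prime with $p\mid\Delta_A$ and $p^2\nmid\Delta_A$, Lemma~\ref{decomp} immediately supplies the factorization $\phi(x)=(x-\lambda_0)^2\varphi(x)$ over $\mathbb{F}_p$ with $\varphi$ square-free and $p\nmid\varphi(\lambda_0)$. This is precisely the hypothesis needed to invoke Lemmas~\ref{LLX}, \ref{corank1} and \ref{cogEqSolEx}, so once $p\mid\ell$ is assumed the whole machinery of Section~\ref{secDiscr} and of the preceding lemmas becomes available.

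The first step is to show that the assumption $p\mid\ell$ forces $p^2\mid\det(\lambda_0I-A)$. I would split on the $\mathbb{F}_p$-rank $r:=\rank_p(\lambda_0I-A)$. Since $\lambda_0$ is a root of $\phi$ over $\mathbb{F}_p$, we have $\det(\lambda_0I-A)=\phi(\lambda_0)\equiv0\pmod p$, so $r\le n-1$. If $r\le n-2$, then Lemma~\ref{corank2} with $k=n-r\ge2$ gives $p^2\mid\det(\lambda_0I-A)$ directly. If $r=n-1$, then $\ker_p(\lambda_0I-A)$ is one-dimensional, so there is a nonzero eigenvector $q$ of $A$ for $\lambda_0$ over $\mathbb{F}_p$; since $p\mid\ell$, Lemma~\ref{LLX} applies to this $q$, and Lemma~\ref{corank1} then yields $p^2\mid\det(\lambda_0I-A)$. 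In either case $p^2\mid\det(\lambda_0I-A)$.

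The second step converts this local divisibility back into the global statement $p^2\mid\Delta_A$. By Lemma~\ref{cogEqSolEx}, $p^2\mid\det(\lambda_0I-A)$ guarantees a solution of $u\phi\equiv v\phi'\pmod{p^2}$ with $u,v\not\equiv0\pmod p$, $\deg v<\deg\phi$ and $\deg u<\deg\phi'$. Encoding the coefficients of $u$ and $v$ into the vector $\eta$ exactly as in Section~\ref{secDiscr}, this solution reads $M^{T}\eta\equiv0\pmod{p^2}$ with $\eta\not\equiv0\pmod p$, where $M$ is the Sylvester matrix of $\phi$ and $\phi'$, so that $\det M=\pm\Delta_A$. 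Applying Lemma~\ref{Square} to $M^{T}$ (which shares the elementary divisors of $M$) shows $p^2\mid d_{2n-1}$, the largest elementary divisor. Finally, because $p\mid\Delta_A$ but $p^2\nmid\Delta_A$, the corank computation in the proof of Lemma~\ref{decomp} gives $\dim\ker_p(M)=1$, so exactly one elementary divisor of $M$ is divisible by $p$, namely $d_{2n-1}$; hence $v_p(\det M)=v_p(d_{2n-1})\ge2$, i.e. $p^2\mid\Delta_A$, the desired contradiction.

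The part I expect to be most delicate is this last bridge: a priori $p^2\mid\det(\lambda_0I-A)$ only controls $\phi$ at the single point $\lambda_0$, whereas $\Delta_A$ is a resultant encoding all coincidences among the roots. The point is that the degree-restricted solvability of $u\phi\equiv v\phi'\pmod{p^2}$ in Lemma~\ref{cogEqSolEx} is exactly the bilinear shadow of both quantities, and combining it with the rank-one kernel of the Sylvester matrix (Lemma~\ref{Square} together with $\dim\ker_p(M)=1$) is what pins the extra factor of $p$ onto $d_{2n-1}$ and therefore onto $\Delta_A$. I would double-check that the degree bounds in Lemma~\ref{cogEqSolEx} match the column structure of $M$, so that $\eta$ genuinely lies in the kernel of $M^{T}$ modulo $p^2$ and is nonzero modulo $p$, since that compatibility is exactly what makes Lemma~\ref{Square} applicable.
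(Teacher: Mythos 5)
Your proof is correct and follows essentially the same route as the paper's: split on $\rank_p(\lambda_0 I-A)$ to get $p^2\mid\det(\lambda_0 I-A)$ via Lemmas~\ref{corank1} and \ref{corank2}, then pass through Lemma~\ref{cogEqSolEx} and Lemma~\ref{Square} applied to the Sylvester matrix to force $p^2\mid\Delta_A$. The only difference is cosmetic: your final appeal to $\dim\ker_p(M)=1$ is not needed, since $p^2\mid d_{2n-1}$ already gives $p^2\mid\det(M)=\pm\Delta_A$ because the largest elementary divisor divides the determinant.
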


\begin{proof}
We prove the lemma by contradiction. Suppose that $p|\ell$. By Lemma~\ref{decomp}, there exists a $\lambda_0$ such that $\phi(x)=(x-\lambda_0)^2\varphi(x)$ over $\mathbb{F}_p$, where $\varphi$ is square-free and $p\nmid\varphi(\lambda_0)$. Thus $\rank_p(\lambda_0I-A)<n$. If $\rank_p(\lambda_0I-A)=n-1$, by Lemma~\ref{corank1}, $p^2|\det(\lambda_0I-A)$. If $\rank_p(\lambda_0I-A)<n-1$, by Lemma~\ref{corank2}, we also have $p^2|\det(\lambda_0I-A)$. Then by Lemma~\ref{cogEqSolEx}, there exist $u(x)$ and $v(x)$ with $\deg(v)<\deg(\phi)$ and $\deg(u)<\deg(\phi')$ such that $u,v\not\equiv0\pmod{p}$ and that $u\phi\equiv v\phi'\pmod{p^2}$. Equivalently, using the notations in Lemma~\ref{decomp}, we have
\[M^T\eta\equiv0\pmod{p^2},\quad \eta\not\equiv0\pmod{p}.\]
By Lemma~\ref{Square}, we have $p^2|d_n(M)$. Thus, $p^2|\det(M)=\pm\Delta_A$; it is a contradiction.
\end{proof}

\begin{rem}
As a by-product of the above lemma, we get that $\rank_p(A)=n-1$ always holds for any prime $p|\Delta_A$ (notice that $p$ do not need to be a divisor of $\ell$) if $\Delta_A$ is odd and square-free. This is because if $\rank_p(\lambda_0I-A)<n-1$, then $p^2|\det(\lambda_0I-A)$, which implies $p^2|\Delta_A$ according to Lemma~\ref{pnmidl} and we get a contradiction since $\Delta_A$ is square-free.
\end{rem}

%If Lemma 4.4 is true, then Theorem 3.5 follows immediately.

Now, we are ready to present the proof of Theorem~\ref{Main1}.

\begin{proof}[\textup{\textbf{Proof of Theorem~\ref{Main1}}}]
We prove the theorem by contradiction. Let $Q\in{O_n(\mathbb{Q})}$ with level $\ell$ such that $Q^TAQ\in S_n(\mathbb{Z})$. Assume on the contrary that $\ell\neq 1$. Let $p$ be any prime divisor of $\ell$. Then $p|\Delta_A$ and $p$ is odd. By the assumption, we have $p^2\nmid \Delta_A$. Thus, according to Lemma~\ref{pnmidl},
$p\nmid \ell$; a contradiction. Thus $\ell=1$ and $Q$ is a signed permutation matrix. The proof is complete.
\end{proof}

\section{Proof of Theorem~\ref{Main2}}

In this section, we present the proof of Theorem~\ref{Main2}. Before doing so, we need the following lemma.

\begin{lemma}\label{Even}Let $A$ be the adjacency matrix of a graph $G$. Then $\Delta_A$ is always even.
\end{lemma}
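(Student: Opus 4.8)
The plan is to invoke Theorem~\ref{divisor} with $p=2$: since $\Delta_A=\pm\res(\phi,\phi')$, it suffices to prove that the characteristic polynomial $\phi$ of $A$ has a multiple factor over $\mathbb{F}_2$, i.e. that $\phi$ is not square-free over $\mathbb{F}_2$. The starting point is the observation that over $\mathbb{F}_2$ the matrix $A$ is symmetric with zero diagonal, so that $u^{T}Au=0$ for every $u\in\mathbb{F}_2^{n}$ (the diagonal terms vanish because $G$ is loopless and the off-diagonal terms cancel in pairs since $2=0$). Thus $A$ is self-adjoint for the standard symmetric form $\langle u,v\rangle=u^{T}v$ and is at the same time \emph{alternating}, and these two facts together will be the source of the repeated factor.

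The key structural step I would carry out is to show that every irreducible factor of $\phi$ over $\mathbb{F}_2$ other than $x$ occurs with even multiplicity, equivalently that the prime-to-$x$ part of $\phi$ is a perfect square over $\mathbb{F}_2$. First, because $A$ is self-adjoint, the primary (generalized eigenspace) decomposition $\mathbb{F}_2^{n}=\bigoplus_{p}V_p$ attached to the distinct irreducible factors $p\mid\phi$ is orthogonal with respect to $\langle\cdot,\cdot\rangle$, and $\langle\cdot,\cdot\rangle$ restricts to a nondegenerate form on each $V_p$. Fixing $p\neq x$, the operator $A$ is invertible on $W:=V_p$, and I would then consider the form $B(u,v):=\langle Au,v\rangle$ on $W$. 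By self-adjointness $B$ is symmetric, by invertibility of $A$ it is nondegenerate, and by the alternating property $B(u,u)=\langle Au,u\rangle=0$, so $B$ is a symplectic form; moreover $A$ is self-adjoint for $B$ as well, since $B(Au,v)=\langle A^{2}u,v\rangle=B(u,Av)$. One then uses the classical fact that a self-adjoint operator on a symplectic space has a perfect-square characteristic polynomial, which forces the multiplicity of $p$ in $\phi$ to be even. Alternatively, in the separable case one identifies $W$ with the field $\mathbb{F}_2[x]/(p)$ on which $A$ acts as multiplication by a root $\theta\neq0$, writes the form as $\langle u,v\rangle=\mathrm{Tr}_{\mathbb{F}_2[x]/(p)/\mathbb{F}_2}(c\,uv)$ with $c\neq0$, and derives a contradiction from $\mathrm{Tr}(c\theta u^{2})=0$ for all $u$ together with the surjectivity of Frobenius, thereby ruling out multiplicity one.

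Finally I would assemble the pieces. Writing $\phi=x^{e}h(x)^{2}$ over $\mathbb{F}_2$ with $h$ coprime to $x$, the degree count gives $n=e+2\deg h$, hence $e\equiv n\pmod 2$. For any graph on $n\ge 2$ vertices we have $\deg\phi=n\ge 2$, so either $h$ is nonconstant, in which case $h^{2}$ already exhibits a repeated factor, or $h$ is constant and $\phi=x^{n}$ with $n\ge 2$, which again has the repeated factor $x$. In both cases $\phi$ is not square-free over $\mathbb{F}_2$, whence $2\mid\res(\phi,\phi')=\pm\Delta_A$ by Theorem~\ref{divisor}, so $\Delta_A$ is even. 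The main obstacle is precisely the structural step of the second paragraph: the usual real-symmetric heuristic that the nonzero eigenvalues of an alternating matrix occur in $\pm$ pairs breaks down in characteristic $2$, so the even-multiplicity statement must be extracted from the symplectic/trace-form structure rather than from eigenvalue pairing, and the eigenvalue $0$ (the factor $x$) has to be treated separately, since it is the one factor permitted to occur with odd multiplicity.
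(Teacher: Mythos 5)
Your overall plan (show that $\phi$ is not square-free over $\mathbb{F}_2$ and invoke Theorem~\ref{divisor}) is the same as the paper's, and the structural statement you are driving at --- that over $\mathbb{F}_2$ the prime-to-$x$ part of $\phi$ is a perfect square --- is correct. But the route you take is far heavier than necessary and its central step has a genuine gap. The ``classical fact'' you invoke, that a self-adjoint operator on a symplectic space has perfect-square characteristic polynomial, is really a characteristic-$\neq 2$ statement: there one writes $xI-T=J^{-1}(xJ-JT)$ with $xJ-JT$ \emph{alternating} and takes the Pfaffian. In characteristic $2$, self-adjointness only makes $JT$ symmetric, not alternating (in your setting the quadratic form of $B(\cdot,A\cdot)$ is $u\mapsto (Au)^{T}(Au)$, which need not vanish), so the Pfaffian argument is unavailable; and the statement is in fact \emph{false} over a general field of characteristic $2$: over $\mathbb{F}_2(t)$ the operator $\bigl(\begin{smallmatrix}0&1\\ t&0\end{smallmatrix}\bigr)$ is self-adjoint for the standard symplectic form on the plane but has irreducible characteristic polynomial $x^2+t$. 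Your fallback, the trace-form computation, only rules out multiplicity exactly one for a separable factor; it says nothing about odd multiplicities $\geq 3$, so it does not close the gap either.

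The gap is repairable precisely because $\mathbb{F}_2$ is perfect. Writing $S=JT$ (symmetric) and expanding $\det(xJ+S)$ over a ring of characteristic $2$, the permutations $\sigma$ and $\sigma^{-1}$ contribute equal terms, so only involutions survive, and the term of an involution with fixed-point set $F$ and matching $\mu$ is $\prod_{i\in F}S_{ii}\cdot\bigl(\prod_{(ij)\in\mu}(xJ_{ij}+S_{ij})\bigr)^{2}$, a perfect square because every element of $\mathbb{F}_2$ is a square; a sum of squares in characteristic $2$ is a square. So the fact you need does hold over $\mathbb{F}_2$, but it must be proved, not cited. Once one sees that the real mechanism is ``squaring is the Frobenius on $\mathbb{F}_2[x]$,'' the paper's proof is a few lines: by Sachs' coefficient theorem every elementary subgraph on an odd number of vertices must contain a cycle, so $c_i$ is even for odd $i$; hence $\phi\equiv g(x^2)$ or $x\,g(x^2)\pmod 2$, and $g(x^2)=g_{1/2}(x)^{2}$ over $\mathbb{F}_2$. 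I would either adopt that argument or supply a complete proof of the symplectic lemma over $\mathbb{F}_2$. (Minor point, common to both proofs: the statement fails for $n=1$, where $\Delta_A=1$; you correctly restrict to $n\geq 2$.)
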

\begin{proof} Let $\phi(x) = x^n +c_{1}x^{n-1} +\cdots+
c_{n-1}x + c_n$ be the characteristic polynomial of graph $G$. Then by Sachs' Coefficients Theorem (see e.g.~\cite{CDS}), we have
\begin{equation}\label{KLM}
c_i = \sum\limits_{H \in {\cal H}_i}(-1)^{p(H)}2^{c(H)},
\end{equation}
where ${\cal H}_i$ is the set of elementary graphs (i.e., graphs in which every component is either $K_2$ or a cycle) with $i$ vertices in $G$, $p(H)$ the number of
components of $H$ and $c(H)$ the number of cycles in $H$. It follows from Eq.~(\ref{KLM}) that $c_i$ is even when $i$ is odd.

If $n$ is even, we have
\begin{align*}
\phi(x)&\equiv x^n+c_2x^{n-2}+\cdots+c_{n-2}x^2+c_n\\
&\equiv (x^{n/2}+c_2x^{(n-2)/2}+\cdots+c_{n-2}x+c_n)^2\pmod{2}.
\end{align*}
If $n$ is odd, we have
\begin{align*}
\phi(x)&\equiv x^n+c_2x^{n-2}+\cdots+c_{n-3}x^3+c_{n-1}x\\
&\equiv x(x^{(n-1)/2}+c_2x^{(n-3)/2}+\cdots+c_{n-3}x+c_{n-1})^2\pmod{2}.
\end{align*}
In both cases, we get that $\phi$ has a multiple factor over $\mathbb{F}_2$. It follows from Theorem~\ref{divisor} that $2|\Delta_A$.
\end{proof}

\begin{proof}[\textup{\textbf{Proof of Theorem~\ref{Main2}.}}]
Let $Q \in {\cal Q}_G$ with level $\ell$. We show that $\ell=1$. Suppose on the contrary that $\ell\neq 1$. Then $\ell$ has a prime divisor $p$. We distinguish the following two cases.

\noindent
\textbf{Case 1.} The prime $p=2$. It follows from Lemma~\ref{Even} that $\Delta_A$ is even. Notice that $d=\gcd(\frac{\det(W)}{2^{\lfloor n/2\rfloor}},\Delta_A)$ is odd by the assumption. It follows that $2^{\lfloor n/2\rfloor+1}\nmid\det(W)$ (otherwise, $d$ would be even). Thus, by Theorem~\ref{LL1} we get that $2\nmid \ell$; this is a contradiction.

\noindent
\textbf{Case 2.} The prime $p$ is odd. According to Lemma~\ref{ellRes}, we have $p|\Delta_A$. By Lemma~\ref{L}, we have $p|\det(W)$, and hence $p|\frac{\det(W)}{2^{\lfloor n/2\rfloor}}$. It follows that $p|d$. Since $d$ is square-free, we have either $p^2\nmid \det(W)$ or $p^2\nmid \Delta_A$. If the former holds, we have $p\nmid \ell$ according to Theorem~\ref{LL}. If the latter holds, we have $p\nmid \ell$ according to Lemma~\ref{pnmidl}. In both cases, we have $p\nmid \ell$, which contradicts the fact that $p$ is a divisor of $\ell$.

Combining Cases 1 and 2, we have $\ell=1$ and $Q$ is a permutation matrix. This completes the proof.
\end{proof}

\section{Numerical results}

In this section, we shall provide some numerical results. First, we give some examples to illustrate the method of the paper, and then we shall conduct numerical experiments to compare Theorems~\ref{Main} and \ref{Main2}. The matrices $A\in{S_n(\mathbb{Z})}$ and the graph $G$ are randomly generated by a personal computer.
All the computations were carried out by using Mathematica 8.0. \\

\vskip 1\baselineskip\noindent
\textbf{Example 1.}
$$A=\left[\begin{array}{rrrrrrrr}
1& 0& 1& -2& -1& 1& -1& -2\\
0&-1& -1& 0& 0& -1& 0& 1\\
1& -1& 0&  1& 0& -2& -2& 1\\
-2& 0& 1& 0& -1& 0& -2& 0\\
-1& 0& 0& -1&  3& -1&1& -2\\
1&-1& -2& 0& -1& 0& -1& -2\\
-1& 0& -2& -2& 1& -1&   0& 0\\
-2& 1& 1& 0& -2&-2& 0&0
\end{array}\right]_{8\times 8}.
$$

It can be computed that $\Delta_A=23\times 7309\times79967\times300191\times 146237798587879.$ It follows from Theorem~\ref{Main1}
that $Q^TAQ\in S_n(\mathbb{Z})$ with $Q\in{O_n(\mathbb{Q})}$ implies that $Q$ is a signed permutation matrix.

\vskip 1\baselineskip\noindent
\textbf{Example 2. }

$$A=\left[\begin{array}{rrrrrrrrrr}
1 & 1 & -1 & 1 & 0 & -1 & -1 & 1 & 0 & 1 \\
 1 & 0 & 1 & 0 & 0 & 0 & 0 & 1 & 1 & -1 \\
 -1 & 1 & -1 & 0 & 1 & -1 & 0 & -1 & 0 & 0 \\
 1 & 0 & 0 & 0 & 1 & 0 & 0 & 1 & 0 & 1 \\
 0 & 0 & 1 & 1 & 1 & 0 & 1 & 0 & 0 & 0 \\
 -1 & 0 & -1 & 0 & 0 & 0 & 1 & 0 & 0 & -1 \\
 -1 & 0 & 0 & 0 & 1 & 1 & 0 & 1 & -1 & -1 \\
 1 & 1 & -1 & 1 & 0 & 0 & 1 & 0 & 1 & 1 \\
 0 & 1 & 0 & 0 & 0 & 0 & -1 & 1 & 0 & -1 \\
 1 & -1 & 0 & 1 & 0 & -1 & -1 & 1 & -1 & 0
\end{array}\right]_{10\times 10}.
$$

It can be computed that $\Delta_A=3^ 2\times 761\times 26561\times 36102323417\times 29304766290781$.
Thus, $\Delta_A$ is not square-free.
Let
$$Q=\left[\begin{array}{rrrrrrrrrr}
 0 & 0 & 0 & 0 & 0 & 0 & 1 & 0 & 0 & 0 \\
 0 & 0 & 0 & 0 & 0 & 0 & 0 & 1 & 0 & 0 \\
 0 & 0 & 0 & 0 & 0 & 0 & 0 & 0 & 1 & 0 \\
 \frac{2}{3} & -\frac{1}{3} & -\frac{1}{3} & \frac{1}{3} & \frac{1}{3} & \
\frac{1}{3} & 0 & 0 & 0 & 0 \\
 0 & 0 & 0 & 0 & 0 & 0 & 0 & 0 & 0 & 1 \\
 \frac{1}{3} & \frac{1}{3} & \frac{1}{3} & \frac{2}{3} & -\frac{1}{3} & \
-\frac{1}{3} & 0 & 0 & 0 & 0 \\
 \frac{1}{3} & \frac{1}{3} & \frac{1}{3} & -\frac{1}{3} & \frac{2}{3} & \
-\frac{1}{3} & 0 & 0 & 0 & 0 \\
 -\frac{1}{3} & \frac{2}{3} & -\frac{1}{3} & \frac{1}{3} & \frac{1}{3} & \
\frac{1}{3} & 0 & 0 & 0 & 0 \\
 -\frac{1}{3} & -\frac{1}{3} & \frac{2}{3} & \frac{1}{3} & \frac{1}{3} & \
\frac{1}{3} & 0 & 0 & 0 & 0 \\
 \frac{1}{3} & \frac{1}{3} & \frac{1}{3} & -\frac{1}{3} & -\frac{1}{3} & \
\frac{2}{3} & 0 & 0 & 0 & 0
\end{array}\right]_{10\times 10}.
$$
We have
$$Q^TAQ=\left[\begin{array}{rrrrrrrrrr}
 0 & 1 & -1 & 0 & 0 & 0 & 0 & -1 & 0 & 1 \\
 1 & 0 & 0 & 0 & 1 & 0 & 0 & 0 & -1 & 0 \\
 -1 & 0 & -2 & 1 & 0 & -1 & -1 & 0 & 0 & 0 \\
 0 & 0 & 1 & 0 & 1 & 0 & 0 & 1 & -1 & 0 \\
 0 & 1 & 0 & 1 & 0 & 0 & 0 & 1 & 0 & 1 \\
 0 & 0 & -1 & 0 & 0 & 2 & 2 & 0 & 0 & 0 \\
 0 & 0 & -1 & 0 & 0 & 2 & 1 & 1 & -1 & 0 \\
 -1 & 0 & 0 & 1 & 1 & 0 & 1 & 0 & 1 & 0 \\
 0 & -1 & 0 & -1 & 0 & 0 & -1 & 1 & -1 & 1 \\
 1 & 0 & 0 & 0 & 1 & 0 & 0 & 0 & 1 & 1
\end{array}\right]_{10\times 10}.
$$
Thus, $Q^TAQ\in{S_n{(\mathbb{Z})}}$. This shows that Theorem~\ref{Main1} is the best possible, in the sense that
if $\Delta_A$ is allowed to have an odd prime factor with exponent larger than one, then it is no longer true that $Q^TAQ\in S_n(\mathbb{Z})$ with $Q\in{O_n(\mathbb{Q})}$ implies that $Q$ is a signed permutation matrix.

\vskip 1\baselineskip\noindent
\textbf{Example 3. }Let the adjacency matrix of graph $G$ be given as following:
$$A=\left[\begin{array}{rrrrrrrrrrrr}
 0 & 0 & 0 & 1 & 0 & 1 & 1 & 0 & 1 & 1 & 1 & 0 \\
 0 & 0 & 1 & 1 & 0 & 1 & 0 & 0 & 0 & 1 & 1 & 1 \\
 0 & 1 & 0 & 0 & 0 & 1 & 0 & 0 & 0 & 1 & 0 & 1 \\
 1 & 1 & 0 & 0 & 0 & 1 & 0 & 1 & 1 & 1 & 1 & 1 \\
 0 & 0 & 0 & 0 & 0 & 1 & 1 & 0 & 0 & 0 & 0 & 1 \\
 1 & 1 & 1 & 1 & 1 & 0 & 1 & 0 & 0 & 1 & 0 & 0 \\
 1 & 0 & 0 & 0 & 1 & 1 & 0 & 0 & 0 & 1 & 1 & 0 \\
 0 & 0 & 0 & 1 & 0 & 0 & 0 & 0 & 0 & 0 & 0 & 1 \\
 1 & 0 & 0 & 1 & 0 & 0 & 0 & 0 & 0 & 0 & 0 & 1 \\
 1 & 1 & 1 & 1 & 0 & 1 & 1 & 0 & 0 & 0 & 1 & 0 \\
 1 & 1 & 0 & 1 & 0 & 0 & 1 & 0 & 0 & 1 & 0 & 0 \\
 0 & 1 & 1 & 1 & 1 & 0 & 0 & 1 & 1 & 0 & 0 & 0
\end{array}\right]_{12\times 12}.
$$
It can be computed that $\Delta_A=2^{ 12}\times 5^3\times 23\times 91502697363972395639457912547$ and
$\det(W)=2^6\times 5\times 13^ 2\times 569759.$ Theorem~\ref{Main} cannot be applied, since $\det(W)/2^6$ is not square-free.
However, we have $d=\gcd(\frac{\det(W)}{2^{\lfloor\frac{n}{2}\rfloor}},\Delta_A)=5$, which is odd and square-free. It follows from Theorem~\ref{Main2} that $G$ is DGS.

\begin{rem} In Example 3, we do not need to factor $\Delta_A$ and $\det(W)$, which are usually quite large when $n$ is large.
Instead, we use Euclidean Algorithm to compute $\gcd(\frac{\det(W)}{2^{\lfloor\frac{n}{2}\rfloor}},\Delta_A)$, which is much faster than factoring $\Delta_A$ and $\det(W)$.
\end{rem}

In the remaining part of this section, we shall conduct numerical experiments to compare Theorems~\ref{Main} and \ref{Main2}. Define
 $${\cal F}_n=\{G|\frac{\det(W)}{2^{\lfloor\frac{n}{2}\rfloor}}~{\rm is~odd~and~square-free}\}$$ and
 $${\cal F}'_n=\{G|\gcd(\frac{\det(W)}{2^{\lfloor\frac{n}{2}\rfloor}},\Delta_A)~{\rm is~odd~and~square-free}\}.$$

 It is apparent that ${\cal{F}}_n\subset {\cal{F'}}_n$. We have performed a series of numerical experiments to see how large the family of graphs ${\cal F}'_n $ is, compared with ${\cal F}_n $. The method is similar to that in~\cite{W4}. The graphs are generated randomly and independently from
the probability space ${\cal G}(n,\frac{1}{2})$ (see e.g. \cite{1}). At each time, we generated $1,000$
graphs randomly, and counted the number of graphs that are in ${\cal F'}_n$. Table 1
records one of such experiments (note the results may vary slightly at each
run of the algorithm; the number of graphs that are in ${\cal F}_n$ was copied from \cite{W4}). The first column is the order $n$ of the graphs generated
varying from $10$ to $80$. The second (resp. the third) column records the number of graphs that belongs to ${\cal F}'_n$ (resp. ${\cal F}_n$) among the randomly generated $1,000$ graphs. The fourth column is the ratio of the number of graphs in ${\cal F'}_n$ and that in ${\cal F}_n$.

\begin{table}[!htp]
\centering

\caption{\textbf{Fractions of Graphs in} ${\cal F'}_n$ and ${\cal F}_n$
 \label{table2.1}}
\begin{tabular}{c|c|c|c}
\hline $n$ & $\#$ (${\cal F}'_n$)&$\#$ (${\cal F}_n$)& $\#({\cal F}'_n)/\# ({\cal F}_n)$\\\hline
\hline
$10$ & 290 & 211 & 1.37441 \\\hline
$15$ & 270 & 201 & 1.34328 \\\hline
$20$ & 279 & 213 & 1.30986 \\\hline
$25$ & 277 & 216 & 1.28241 \\\hline
$30$ & 294 & 233 & 1.2618  \\\hline
$35$ & 275 & 229 & 1.20087 \\\hline
$40$ & 281 & 198 & 1.41919 \\\hline
$45$ & 263 & 202 & 1.30198 \\\hline
$50$ & 261 & 204 & 1.27941 \\\hline
$60$ & 271 & *   & * \\\hline
$70$ & 281 & *   & * \\\hline
$80$ & $277 $ &*& * \\\hline
\end{tabular}

\end{table}
It can be seen from Table 1 that the number of graphs in ${\cal F'}_n$ is larger than that in ${\cal F}_n$ by approximately $30\%$ on average. This shows that
Theorem~\ref{Main2} is more applicable than Theorem~\ref{Main}.
\section{Conclusions and future work}

Motivated by the generalized spectral characterization of graphs, in this paper, we have considered the problem
of when $Q^TAQ\in{S_n(\mathbb{Z})}$ with $Q\in {O_n(\mathbb{Q})}$ implies that $Q$ is a signed permutation matrix.
A surprisingly simple answer was provided in terms of whether the discriminant of $A$ is odd and square-free.
Combined with previous work, we gave a new and efficient method for a graph being determined by its generalized spectrum, which
has several advantages over the previous one.

However, we believe that our work in the paper can be extended in several directions.

\begin{enumerate}
\item We only dealt with a special case of Problem 1 (i.e., Problem 2) by providing a sufficient condition, under which all matrices $Q\in {O_n(\mathbb{Q})}$ such that $Q^TAQ\in{S_n(\mathbb{Z})}$ are signed permutation matrices. The general situation still needs further investigations.

    \item In attacking Problem 2, we only consider the case that $\Delta_A$ is odd (notice that we proved Lemma~\ref{cogEqSolEx}, which is a key part of the proof of Theorem~\ref{Main1}, only for odd primes). As shown by Lemma~\ref{Even}, $\Delta_A$ is always even, when $A$ is the adjacency matrix of a graph (or more generally, when all the diagonal entries of $A$ are even). This means the conditions of Theorem~\ref{Main1} are not satisfied when $A$ is the adjacency matrix of a graph. Thus, to deal with the situation that $\Delta_A$ is even, we need to develop new method for the prime $p=2$.

\item It is well-known that whether the discriminant $\Delta(f)$ of a polynomial $f(x)\in{\mathbb{Z}[x]}$ is zero is closely related to whether $f$ has repeated roots over $\mathbb{C}$. Moreover,
if $f(x)$ is a monic irreducible polynomial over $\mathbb{Q}$ and $\Delta(f)$ is square-free, the ring of algebraic integers $\mathbb{O}_K$ in $\mathbb{K}=\mathbb{Q}[x]/(f(x))$ can be generated by a single element, i.e., $\mathbb{O}_K=\mathbb{Q}[\alpha]$. In view of this, several authors have investigated the density of polynomials with square-free discriminates (in a precise sense). In a recent break-through work, Bhargava et al.~\cite{BSW} proved that for monic polynomials of any fixed degree $n$, the density of polynomials with square-free discriminates is as expected, namely, it is the Euler product over all primes $p$ of the probabilities that is square-free at $p$, i.e,
$\Pi_p a_p$, where $p$ runs over prime numbers and $a_p$ denotes the probability that $\Delta_A$ is not divisible by $p^2$. These probability have been computed by Ash et. al.~\cite{ABZ} as follows:
\begin{eqnarray*}
a_p=\left\{
\begin{array}{cc}
\frac{1}{2}&p=2,n\geq 2;\\
1-\frac{1}{p^2}&p>2,n=2;\\
1-\frac{2}{p^2}+\frac{1}{p^3}&p>2,n=3;\\
1-\frac{1}{p}+\frac{(p-10^2(1-(-p)^{-n+1})}{p^2(p+1)}&p>2,n\geq 4.
\end{array}\right.
\end{eqnarray*}
 The density tends to approximately $35.8232\%$ as $n$ goes to infinity.
  It is an interesting future work to determine the density of graphs with square-free discriminants (for odd primes), which in turn, would give rise to the density of graphs determined by their generalized spectra.

\end{enumerate}

\end{document}